\numberwithin{equation}{section}
\newtheorem{defin}{Definition}[section]
\newtheorem{theorem}[defin]{Theorem}
\newtheorem{lemma}[defin]{Lemma}
\newtheorem{proposition}[defin]{Proposition}
\theoremstyle{definition} {\newtheorem{remark}[defin]{Remark}}
\title[Homogenization and $\cA$-quasiconvexity for $p=1$]{Homogenization of functionals with linear growth in the context of $\cA$-quasiconvexity}%
\author{Jos\'e Matias}
\author{Marco Morandotti}
\address{SISSA -- International School for Advanced Studies, Via Bonomea, 265, 34136 Trieste, Italy}
\email[M.~Morandotti \myenv]{marco.morandotti@sissa.it}
\author{Pedro M.~Santos}
\address{CAMGSD, Departamento de Matem\'atica, Instituto Superior T\'ecnico, Av.\@ Rovisco Pais, 1, 1049-001 Lisboa, Portugal}
\email[J.~Matias, P.~M.~Santos]{\{jmatias,pmsantos\}@math.ist.utl.pt}
\date{September 26, 2014. Preprint SISSA: 49/2014/MATE}
\begin{document}

\begin{abstract}
This work deals with the homogenization of functionals with linear growth in the context of $\cA$-quasiconvexity.
A representation theorem is proved, where the new integrand function is obtained by solving a cell problem where the  coupling between homogenization and the $\cA$-free condition plays a crucial role.
This result extends some previous work to the linear case, thus allowing for concentration effects.
\end{abstract}
\maketitle%
{\small

\keywords{\noindent {\bf Keywords:} {$\cA$-quasiconvexity, homogenization, representation of integral functionals, concentration effects.}
}

\bigskip
\subjclass{\noindent {\bf {2010}
Mathematics Subject Classification:}
{
Primary 	35B27;  	
Secondary 
49J40,  	
49K20,   
35E99.   
 }}
}\bigskip
\bigskip

\tableofcontents%

\section{Introduction}
The mathematical theory of homogenization was introduced in order to describe the behavior of composite materials and reticulated structures. 
These are characterized by the fact that they contain different constituents, finely mixed in a structured way which 
bestows enhanced properties on the composite material.
Since heterogeneities are small compared with global dimensions, usually different scales are used to describe the material: a macroscopic scale describes the behavior of the bulk, while at least one microscopic scale describes the heterogeneities of the composite. In the context of Calculus of Variations the limiting (homogenized)  behavior is usually captured through $\Gamma$-convergence techniques (see \cite{DGF}, \cite{DM93}, \cite{BRA}) or through the unfolding operator (see \cite{Dm1}, \cite{Dm2}, \cite{Dm3}). An important tool to address the case of two-scale homogenization was developed in the works of G.\@ Nguetseng \cite{NG} and G.\@ Allaire \cite{Allaire} (see also \cite {BA} and \cite{LNW}).

Though there is an extensive bibliography in the subject, here we mention some results closely connected to this work, namely homogenization results for different growth or coercivity conditions of the integrand, and for integrands depending on gradients, or more generally depending on vector fields in the kernel of a constant-rank, first-order linear differential operator $\cA$, i.e., in the context of $\mathcal{A}$-quasiconvexity.
The asymptotic behavior of functionals modeling elastic materials with fine microstructure (the fineness being  accounted for  by a small parameter $\eps>0$), without considering the possibility of fracture was developed in the seminal works of S.\@ M\"uller \cite{M} and  A.\@ Braides \cite {B}.  The case involving fracture was considered in \cite{BDV}.
In the first two cases, the models involve integrands depending on the derivative of a Sobolev function whereas in the third the integrands depend on the derivative of a special function of bounded variation.
The growth conditions assumed on the integrand function are usually of order $p>1$ and $p=1$, respectively, thus allowing for concentrations to occur in the latter case. 

To ensure lower semicontinuity of the functionals, quasiconvexity is the property that the integrand functions are required to satisfy.
It is the natural generalization to higher dimension of the notion of convexity, and it is expressed by an optimality condition with respect to variations that are gradients.
This notion was further generalized by the introduction of $\cA$-quasiconvexity, and, as noted by Tartar \cite{T}, this allows to treat more general problems in continuum mechanics and electromagnetism, where constraints other than vanishing curl are considered. 

We present here examples of such operators (for more examples see \cite{FM}):
\begin{enumerate}
\item  ($\cA= \text{div}$) For $\mu \in {\cM}(\Omega; \R{N})$ we define
$$\cA \mu = \sum_{i=1}^N \frac {\partial \mu^i} {\partial x_i}.$$
\item ($\cA=\text{curl}$) For  $\mu \in {\cal M}(\Omega; \R{d})$, with $d=m \times N$, we define
$$\cA \mu = {\left( \frac {\partial \mu_k^j} {\partial x_i} -  \frac {\partial \mu_i^j} {\partial x_k} \right) }_{j=1,..,m;\, i,k=1,..,N}.$$
\item (Maxwell's Equations)  For  $\mu \in {\cM}(\R{3}; \R{3 \times 3})$ we define
$$ \cA \mu = \left( \text{div} (m+h), \text{curl} \,h \right)$$
where $\mu = (m,h).$
\end{enumerate}
The notion of $\mathcal{A}$-quasiconvexity and its implications for the lower semicontinuity of functionals was first investigated by B.\@ Dacorogna \cite{Dac} and later developed by I.\@ Fonseca and S.\@ M\"uller \cite{FM} for the study of lower semicontinuity of functionals on $\mathcal{A}$-free fields with growth $ p > 1.$ 
A.\@ Braides, I.\@ Fonseca, and G.\@ Leoni \cite{BFL} derived a homogenization result in the context of $\cA$-quasiconvexity for integrands with growth of order $ p >1$, with a microscopic scale. 
In \cite {FK}, the authors use the unfolding operator to handle a homogenization problem with $p$-growth, but no coercivity, for integrands with two scales. 

Allowing for linear growth, i.e., taking $p=1$, and coercivity implies working with sequences that are only bounded in $L^1$ and hence that can converge weakly-* (up to a subsequence) to some bounded Radon measure. In \cite{KriRin_CV_10} the relaxation of signed functionals with linear growth in the space BV of functions with bounded variation was studied. The generalization of this result in the context of $\mathcal{A}$-quasiconvexity was done in \cite{FM1}, \cite{rin1} and \cite{BCMS}.

In this work we derive a homogenization result in the context of $\cA$-quasiconvexity for integral functionals with linear growth.
This extends to the case $p=1$ the homogenization results derived in \cite{BFL}.
The linear growth condition implies that concentration effects may appear and they need to be treated by carefully applying homogenization techniques in the setting of weak-* convergence in measure.

\smallskip
\par In what follows let $\Omega\subset\R{N}$, $N\geq2$, be a bounded open set, and, for $d\geq1$,
let $f:\Omega{\times}\R{d}\to[0,+\infty)$ be a non-negative measurable function in the first variable and Lipschitz continuous in the second, that satisfies the following linear growth-coercivity condition: there exist $C_1,C_2>0$ such that
\be\label{200}
C_1|\zeta|\leq f(x,\zeta)\leq C_2(1+|\zeta|)\qquad\text{for all $(x,\zeta)\in\Omega{\times}\R{d}$}.
\ee
Moreover we assume that $x\mapsto f(x,\zeta)$ is $Q$-periodic for each $\zeta\in\R{d}$. 
Here and in the following, $Q:=(-\frac12,\frac12)^N$ will denote the unit cube. 

\par We will consider a linear first order partial differential operator $\cA:\cD'(\Omega;\R{d})\to\cD'(\Omega;\R{M})$ of the form
\be\label{operator} 
\cA = \sum_{i=1}^N A^{(i)} \frac{\partial}{\partial x_i}, \qquad A^{(i)} \in \M{M\times d}, \; \; M \in \N{},
\ee
that we assume throughout  to satisfy the following two conditions:
\begin{enumerate}
 \item[$(H_1)$] Murat's condition of {\it constant rank} (introduced in \cite{Mur_84}; see also \cite{FM}) i.e.,  there exists $c\in \N{}$ such that 
$$
\mathrm{rank}\, \left(\sum_{i=1}^N A^{(i)} \xi_i\right)=c \quad\, \text{for all}\,\,\, \xi=(\xi_1,...,\xi_d)\in \cS^{d-1};
$$
here and in the following $\cS^{d-1}$ denotes the unit sphere in $\R{d}$.
\item[$(H_2)$]  $\mathrm{Span}(\cC)=\R{d}$ (where $\cC$ stands for the characteristic cone associated with the operator $\cA$; see Definition \eqref{356}).
\end{enumerate} 

\par Let $\cM(\Omega;\R{d})$ denote the space of $\R{d}$-valued Radon measures defined on $\Omega$, and let $\mu\in\cM(\Omega;\R{d})\cap\ker\cA$. 
The object of this work is to prove a representation theorem for the functional
$$
\cF(\mu):=\inf\left\{\liminf_{n\to\infty} \int_\Omega f\left(\frac{x}{\eps_n},u_n(x)\right)\,\de x,\;\;\begin{array}{l}
\{u_n\}\subset\Lp1{}(\Omega;\R{d}),\;\;\cA u_n\stackrel{W^{-1,q}}{\longrightarrow}0, \\
u_n\wsto\mu,\,\, |u_n|\wsto\Lambda,\;\; \Lambda(\partial\Omega)=0
\end{array}\right\},
$$
where $\cA$ is defined in \eqref{operator} and satisfies $(H_1)$ and $(H_2)$, and  $q\in(1,\frac{N}{N-1})$.
%

\par Given a function $\phi:\R{d}\to\R{}$, we recall that its \emph{recession function} at infinity is defined by
\be\label{104}
\phi^\infty(b):=\limsup_{t\to+\infty}\frac{\phi(tb)}t,\qquad \text{for all $b\in\R{d}$.}
\ee

We also recall that for a measure $\mu\in\cM(\Omega;\R{d})$ we write $\mu=\mu^a+\mu^s$ for its Radon-Nikod\'ym decomposition, where $\mu^a$ is the absolutely continuous part with respect to the Lebesgue measure $\cL^N$, and $\mu^s$ is the singular part. 
This means that there exists a density function $u^a\in L^1(\Omega;\R{d})$ such that $\mu^a=u^a\cL^N$.
In the following, we will denote by $\Lp1{Q-\per}(\R{N};\R{d})$ the space of $\R{d}$-valued $L^1$ functions defined on $Q$ and extended by $Q$-periodicity to the whole of $\R{N}$.

We now state the main result of this paper.
\begin{theorem}\label{main}
Let $\Omega\subset\R{N}$ be a bounded open set, and 
let $f:\Omega{\times}\R{d}\to[0,+\infty)$ be a function which is measurable and $Q$-periodic in the first variable and Lipschitz continuous in the second, satisfying the growth condition \eqref{200}. 
For any $b\in\R{d}$, define 
\be\label{103}
f_{\cA-\hom}(b):=\inf_{R\in\N{}} \inf\left\{\ave_{RQ} f(x,b+w(x))\,\de x,\quad w\in\Lp1{RQ-\per}(\R{N};\R{d})\cap\ker\cA,\;\;\ave_{RQ} w=0\right\},
\ee
where $R\in\N{}$ and $f_{\cA-\hom}^\infty$ is the recession function of $f_{\cA-\hom}$ (see \eqref{104}).
For every $\mu\in\cM(\Omega;\R{d})\cap\ker\cA$, let $\mu=u^a\cL^N+\mu^s$ and let
$$
\cF_{\cA-\hom}(\mu):=\int_\Omega f_{\cA-\hom}(u^a)\,\de x+\int_\Omega f_{\cA-\hom}^\infty\left(\frac{\de\mu^s}{\de|\mu^s|}\right)\,\de|\mu^s|.
$$
Then, $\cF(\mu)=\cF_{\cA-\hom}(\mu)$.
\end{theorem}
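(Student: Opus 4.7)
The plan is to prove the two inequalities $\cF(\mu)\geq\cF_{\cA-\hom}(\mu)$ and $\cF(\mu)\leq\cF_{\cA-\hom}(\mu)$ separately, combining the $\cA$-quasiconvex homogenization blueprint of \cite{BFL} (which treats $p>1$) with the linear-growth/concentration techniques developed in \cite{FM1}, \cite{rin1}, \cite{BCMS} for the non-homogenized case. The workhorse of both directions is the Fonseca-M\"uller projection onto $\ker\cA$, whose existence is guaranteed by the constant-rank hypothesis $(H_1)$: it converts asymptotic $\cA$-freeness ($\cA u_n\to0$ in $W^{-1,q}$) into exact $\cA$-freeness up to a vanishing error in $L^q$, and simultaneously supplies the admissible test fields for the cell problem \eqref{103}.

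\textbf{Lower bound by localization and blow-up.} Given any admissible sequence $\{u_n\}$ for $\cF(\mu)$, extract a subsequence (not relabeled) so that the set functions $E\mapsto\int_E f(x/\eps_n,u_n)\,\de x$ converge weakly-$*$ to a nonnegative Radon measure $\nu$ on $\overline{\Omega}$. It suffices to prove $\de\nu/\de\cL^N\geq f_{\cA-\hom}(u^a)$ at $\cL^N$-a.e.\@ point of $\Omega$ and $\de\nu/\de|\mu^s|\geq f_{\cA-\hom}^\infty(\de\mu^s/\de|\mu^s|)$ at $|\mu^s|$-a.e.\@ point, after which integration yields the desired bound. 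At a typical Lebesgue point $x_0$ of $u^a$ I would blow up $u_n$ on cubes $x_0+r_n Q$ with $r_n/\eps_n\in\N{}$ (so the microstructure has the correct period after rescaling), and apply the projection onto $\ker\cA$ to produce a competitor for \eqref{103} at $b=u^a(x_0)$; a diagonal extraction in $(r_n,n)$ then gives the pointwise lower bound. At a typical point for $|\mu^s|$ one argues similarly but rescales at a rate $\rho_n$ matching the concentration of $\mu^s$, exploiting the positive $1$-homogeneity of $f_{\cA-\hom}^\infty$ and a cell-problem characterization of the recession function (which follows from \eqref{103} and \eqref{104} by a standard homogeneity argument) to conclude.

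\textbf{Upper bound by construction of recovery sequences.} I would build recovery sequences in three stages. First, when $\mu=b\,\cL^N$ is constant on a small cube $B\subset\Omega$, fix $\delta>0$, pick $R\in\N{}$ and $w\in\Lp1{RQ-\per}(\R{N};\R{d})\cap\ker\cA$ with $\ave_{RQ} w=0$ and $\ave_{RQ} f(x,b+w(x))\,\de x\leq f_{\cA-\hom}(b)+\delta$, and set $u_n(x):=b+w(x/\eps_n)$; by $Q$-periodicity of $f$ and $\cA$-freeness of $w$, this sequence is $\cA$-free, $u_n\wsto b\,\cL^N$ on $B$, and $\int_B f(x/\eps_n,u_n)\,\de x\to|B|\ave_{RQ}f(y,b+w(y))\,\de y\leq|B|f_{\cA-\hom}(b)+\delta|B|$. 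Second, a piecewise-constant approximation of $u^a$ on a fine cube partition of $\Omega$, glued by cut-offs and corrected via the projection onto $\ker\cA$ to restore asymptotic $\cA$-freeness, handles the general absolutely continuous part. Third, for the singular part one uses the $1$-homogeneity of $f_{\cA-\hom}^\infty$ to write $f_{\cA-\hom}^\infty(b_0)=\lim_{t\to\infty}f_{\cA-\hom}(tb_0)/t$, approximates $\mu^s$ weakly-$*$ by measures of the form $\sum_i t_i b_i\chi_{B_i}\cL^N$ with $|B_i|\to0$ and $t_i\to\infty$ calibrated so that the approximating measures converge weakly-$*$ to $\mu^s$, and applies the first-step construction on each $B_i$ with amplitude $t_i b_i$. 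A final diagonalization in all parameters and a boundary cut-off enforcing $\Lambda(\partial\Omega)=0$ yield a global recovery sequence admissible in $\cF(\mu)$.

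\textbf{Main obstacles.} The principal difficulty---absent when $p>1$ as in \cite{BFL}---is the coexistence of two genuinely different scales, the homogenization scale $\eps_n$ and the concentration scale of $\mu^s$, which must be decoupled by a careful order of limits in the diagonal argument: oscillations at the former may not be allowed to interfere with concentrations at the latter. A secondary obstacle is that \eqref{103} is an infimum over arbitrarily large periods $R$, so no single minimizer $w$ can be fixed and one must let $R\to\infty$ jointly with $\eps_n\to0$; the freedom in $R$ is exactly what makes $f_{\cA-\hom}$ $\cA$-quasiconvex rather than merely cell-periodic. Finally, the fact that $\cA u_n\to0$ only in $W^{-1,q}$ (rather than exactly) must be preserved along all approximations, which forces the projection onto $\ker\cA$ to be applied repeatedly; here the spanning condition $(H_2)$ is crucial in ensuring that the correctors needed to absorb the $\cA$-defects themselves belong to $\ker\cA$, so that all error terms close up in $L^1$.
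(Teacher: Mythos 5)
The broad contour of your plan---localize, blow up, project onto $\ker\cA$, combine the techniques of \cite{BFL} with \cite{FM1,rin1,BCMS}---matches the paper, and your treatment of the absolutely continuous part of the lower bound is essentially the argument in the paper (blow up at Lebesgue points, cut off, periodize, apply the Fonseca--M\"uller projection, use translation invariance of the cell formula). Your upper bound, however, takes a different route: you build recovery sequences explicitly for both the absolutely continuous and the singular part, approximating $\mu^s$ by piecewise-constant measures of amplitude $t_i\to\infty$ and gluing. The paper instead proves the density estimate only for regular measures $\mu=u\cL^N$ (by inserting a rescaled cell-competitor $w(h_n k(\cdot-x_0)/r_j)$ into the localized $\cF$) and then extends to all measures by mollifying $\mu$ and invoking Reshetnyak-type upper semicontinuity (Proposition~\ref{usc}); this sidesteps the delicate gluing/$\cA$-correction you would need and is the cleaner route for $p=1$.

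Where there is a genuine gap is the lower bound for the singular part. You write that one ``rescales at a rate $\rho_n$ matching the concentration of $\mu^s$'' and then passes to the cell problem via $1$-homogeneity of $f_{\cA-\hom}^\infty$. But the key difficulty you do not address is that the blow-up of $\mu$ at a $|\mu^s|$-typical point is only a \emph{tangent measure} $\tau$, a priori a genuine (non-absolutely-continuous) measure with no periodicity, so it is not yet a legal competitor for \eqref{103}, and simply projecting does not fix this. The paper resolves it with a structural argument: one distinguishes whether $v_{x_0}:=\frac{\de\mu^s}{\de|\mu^s|}(x_0)$ lies in the characteristic cone $\cC$ or not. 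If $v_{x_0}\notin\cC$ then $\cV_{v_{x_0}}=\{0\}$ and $\tau=v_{x_0}\cL^N$ is forced, so periodization is immediate. If $v_{x_0}\in\cC$, one must work in a rotated cube $Q_0$ aligned with $\cV_{v_{x_0}}$ and use that $\cA\tau=0$ forces $|\tau|$ to be translation-invariant in the directions of $\cV_{v_{x_0}}^\perp$, which is precisely what allows $\tau$ (after regularization) to be extended $Q_0$-periodically with no penalty from $\cA$. On top of this, Lemma~\ref{375} is needed to replace the blow-up sequence $\hat w_j$ by the regularized tangent-measure sequence $\tilde v_j$ in the integral, and $\cA$-quasiconvexity of $f_{\cA-\hom}$ (Remark~\ref{fAhomAqc}) together with convexity along cone directions is what upgrades the $\limsup$ in the recession function to a limit in the cone case. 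None of these steps appear in your proposal, and without them the singular lower bound does not close; simply invoking $1$-homogeneity is not enough. Note also that $(H_2)$ enters here (via the cone $\cC$ spanning $\R{d}$), not in the way you describe (making correctors $\cA$-free).
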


\par The overall plan of this work in the ensuing sections is as
follows: in Section \ref{preliminaries} we set up the notation, concepts, and preliminary results that will be used throughout the paper. 
Section \ref{mainproof} will be devoted to the proof of Theorem \ref{main}.



\section{Preliminaries}\label{preliminaries}

\subsection{Remarks on measure theory}

\par In this section we recall some notations and well known results in Measure Theory
(see, e.g., \cite{AmbrosioFuscoPallara00}, \cite{EG}, \cite{FonLeo}, and the
references therein).

\par Let $X$ be a locally compact  metric space and let $C_{c}(X;\R{d})$,  $d\geq 1,$
denote the set of continuous functions with compact support on
$X$. We denote by  $C_{0}(X;\R{d})$  the completion of $C_{c}(X;\R{d})$ with
respect to the supremum norm.  Let $\cB(X)$ be the Borel $\sigma$-algebra of $X.$ By 
Riesz's Representation Theorem, the dual of the Banach space
$C_{0}(X;\R{d})$, denoted by $\cM (X; \R{d})$, is the space of finite $\R{d}$-valued Radon measures $\mu:
{\cB}(X)\to \R{d}$ under the pairing 
$$<\mu,\varphi>:=\int_{X} \varphi \, d\mu\equiv \sum_{i=1}^{d} \int_{X}\varphi_i \, d\mu_i,\qquad \text{for every $\varphi\in C_0(X;\R{d})$}.$$
The space $\mathcal M (X; \R{d})$ will be endowed with the weak$^*$-topology deriving from this duality. In particular we say that
a sequence $\{\mu_n\}\subset \mathcal M (X; \R{d})$ converges weakly$^*$ to $\mu \in \mathcal M (X; \R{d})$ (indicated by $\mu_{n}
\overset{*}{\rightharpoonup} \mu$) if  for all $\varphi\in
C_{0}(X;\R{d})$
$$\lim_{n\to \infty} \displaystyle\int_{X} \varphi\, d\mu_{n}=\displaystyle\int_{X} \varphi\, d\mu.$$
If $d=1$ we write by simplicity
$\mathcal M (X)$ and we denote by $\mathcal M^+ (X)$ its subset of positive measures.
 Given $\mu\in \mathcal M (X; \R{d})$, let $|\mu|\in\cM^+(X)$  denote  its {\it total
variation} and  let $\supp\;\mu$ denote  its  {\it support}.

\begin{remark}\label{inclusions}
We recall that for an open set $\Omega\subset\R{N}$ we have $W_{0}^{1,p}(\Omega;\R{d}) \subset\subset C_{0}({\Omega};\R{d})$ for  ${p}>N$. 
Moreover, $\mathcal M (\Omega; \R{d})$ is compactly embedded in $W^{-1,q}(\Omega; \R{d})$,
$1<q<\frac{N}{N-1}$, where $W^{-1,q}(\Omega; \R{d})$ denotes the dual 
 space of $W_0^{1,q'}(\Omega; \R{d})$  with $q'$, the conjugate exponent of $q$,  given by the relation 
$\frac1{q} + \frac1{q'}=1$.
\end{remark}

\par We will need the following strong form of the Besicovitch's derivation theorem which is
due to Ambrosio and Dal Maso \cite{ADMaso_92} 
(see also \cite[Theorem~2.22 and Theorem~5.52]{AmbrosioFuscoPallara00}
or \cite[Theorem~1.155]{FonLeo}).

\begin {theorem} \label{general}
Let $\mu\in \mathcal M^+(\Omega)$ and $\nu\in \mathcal M (\Omega; \R{d})$. Then
there exists a Borel set $N \subset \Omega$ with $\mu(N)=0$ such that
for every $x \in (\supp \mu)  \backslash N$
$$\frac{d\nu}{d\mu}(x) 
=\frac{d\nu^{a}}{d\mu}(x) 
= \lim_{\epsilon \rightarrow 0} 
\frac{\nu \big( D(x,\eps) \cap \Omega\big)}{\mu \big( D(x,\eps) \cap \Omega\big)}\in \R{}$$
and
$$\frac{d\nu^{s}}{d\mu}(x) 
= \lim_{\epsilon \rightarrow 0} 
\frac{\nu^s \big( D(x,\eps) \cap \Omega\big)}{\mu \big( D(x,\eps) \cap \Omega\big)}
=0,$$
where $D\subset\R{N}$ is any bounded, convex, open set containing the origin and $D(x,\eps):=x+\eps D$
(the exceptional set $N$ is independent of the choice of $D$).
\end{theorem}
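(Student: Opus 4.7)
The plan is to prove the theorem in three stages, along the lines of Ambrosio and Dal Maso \cite{ADMaso_92}. For a fixed bounded convex open set $D$ containing the origin, I would first show that at $\mu$-a.e.\@ $x \in \supp\mu$ the limit $\lim_{\eps \to 0} \nu(D(x,\eps) \cap \Omega)/\mu(D(x,\eps) \cap \Omega)$ exists and equals $\frac{d\nu^a}{d\mu}(x)$. The essential tool is a Besicovitch--Morse covering theorem for families of homothets of $D$: any family $\{D(x,\eps_x)\}_{x \in A}$ covering a set $A \subset \supp\mu$ with $\sup_x \eps_x < \infty$ admits a countable subfamily of uniformly bounded overlap still covering $A$. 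A standard maximal-function/Vitali argument then yields the weak-type bound
$$\mu\Bigl(\Bigl\{x \in A : \limsup_{\eps \to 0} \frac{|\nu|(D(x,\eps) \cap \Omega)}{\mu(D(x,\eps) \cap \Omega)} > t\Bigr\}\Bigr) \le \frac{c}{t}\,|\nu|(A'),$$
whence $\mu$-a.e.\@ existence of the derivative and its identification with the Radon--Nikod\'ym density. The vanishing of the singular part follows by applying the same argument to $\nu^s$: decomposing $\nu = \nu^a + \nu^s$ with $\nu^s \perp \mu$ gives $(\nu^s)^a = 0$, hence $\lim_{\eps \to 0} \nu^s(D(x,\eps) \cap \Omega)/\mu(D(x,\eps) \cap \Omega) = 0$ at $\mu$-a.e.\@ $x$.

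To remove the dependence of the exceptional set on $D$, I would fix a countable family $\{D_k\}$ of bounded convex open sets containing $0$ which is dense in the homothetic sense: for every such $D$ and every $\delta > 0$ there exist indices $k_1, k_2$ and scaling factors $\alpha_1, \alpha_2 \in [1-\delta, 1+\delta]$ with $\alpha_1 D_{k_1} \subset D \subset \alpha_2 D_{k_2}$ (convex polytopes with rational vertices together with rational dilations provide such a family). Applying the first step to every $D_k$ produces exceptional sets $N_k$ with $\mu(N_k) = 0$; set $N := \bigcup_k N_k$. For $x \notin N$ and an arbitrary $D$, the sandwich inequalities applied separately to the Jordan decomposition $\nu = \nu^+ - \nu^-$ and to $\mu$ give
$$\frac{\nu^\pm(\alpha_1 D_{k_1}(x,\eps) \cap \Omega)}{\mu(\alpha_2 D_{k_2}(x,\eps) \cap \Omega)} \le \frac{\nu^\pm(D(x,\eps) \cap \Omega)}{\mu(D(x,\eps) \cap \Omega)} \le \frac{\nu^\pm(\alpha_2 D_{k_2}(x,\eps) \cap \Omega)}{\mu(\alpha_1 D_{k_1}(x,\eps) \cap \Omega)};$$
combined with the first step applied to $D_{k_1}$ and $D_{k_2}$, this identifies the $\eps \to 0$ limit of the central ratio with $\frac{d\nu^a}{d\mu}(x)$ up to multiplicative errors governed by $\mu(\alpha_2 D_{k_2}(x,\eps))/\mu(\alpha_1 D_{k_1}(x,\eps))$, which must be driven to $1$ as $\delta \to 0$.

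The main obstacle is exactly this last comparison, since $\mu$ need not be doubling. It is overcome by applying the first step also to the auxiliary finite Radon measures obtained by restricting $\mu$ to the annular regions $\alpha_2 D_{k_2} \setminus \alpha_1 D_{k_1}$ (for rational scaling parameters) and viewed through homothets of a reference $D_{k_0}$: their Radon--Nikod\'ym derivative with respect to $\mu$ is the characteristic function of a set whose Lebesgue measure is small when $\delta$ is small, so at $\mu$-a.e.\@ point the asymptotic ratio tends to $0$ as $\delta \to 0$. Enlarging $N$ by the resulting countable union of $\mu$-null sets preserves $\mu(N) = 0$ and yields the $D$-independence of the exceptional set, completing the proof.
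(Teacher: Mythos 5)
The paper does not prove this theorem at all: it is quoted as a known result due to Ambrosio and Dal Maso, with references to \cite{ADMaso_92}, \cite{AmbrosioFuscoPallara00}, and \cite{FonLeo}. So there is no in-paper proof to compare against; what can be said is whether your argument is actually sound.

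The first two stages of your outline are fine and standard: a Besicovitch--Morse covering theorem for homothets of a fixed convex $D$ yields the weak-type maximal estimate and hence differentiation with an exceptional set $N_D$ depending on $D$, and the singular part vanishes because $(\nu^s)^a=0$. You also correctly identify the real content of the theorem, namely making the null set independent of $D$, and correctly locate the obstruction: after sandwiching $D$ between $\alpha_1 D_{k_1}$ and $\alpha_2 D_{k_2}$ you must control $\mu\bigl(\alpha_2 D_{k_2}(x,\eps)\bigr)/\mu\bigl(\alpha_1 D_{k_1}(x,\eps)\bigr)$ as $\eps\to 0$, and $\mu$ is not assumed doubling.

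The third paragraph, however, does not resolve this. Restricting $\mu$ to the fixed annulus $S:=\alpha_2 D_{k_2}\setminus\alpha_1 D_{k_1}$ (centred at the origin) and differentiating $\mu\llcorner S$ against $\mu$ through homothets of $D_{k_0}$ produces, by the very theorem you are using, the limit $\chi_S(x)$ at $\mu$-a.e.\ $x$ -- a value in $\{0,1\}$, not a quantity that shrinks with $\delta$. The phrase ``a set whose \emph{Lebesgue} measure is small'' is the warning sign: Lebesgue-smallness of $S$ is irrelevant here because $\mu$ can be entirely singular, and more to the point the object you actually need to bound is $\mu\bigl(x+\eps S\bigr)/\mu\bigl(\alpha_1 D_{k_1}(x,\eps)\bigr)$, where the annulus is recentred at $x$ and rescaled by $\eps$ -- a family of sets depending on $(x,\eps)$, not a single set you can restrict $\mu$ to. So the step that is supposed to kill the measure-ratio error is not well-posed as written, and what it would give if interpreted literally is the wrong limit. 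Closing this gap requires a genuinely different mechanism (Ambrosio--Dal Maso handle the passage from a countable family of shapes to all convex shapes via a dedicated continuity lemma controlling $\mu\bigl(D'(x,\eps)\bigr)/\mu\bigl(D(x,\eps)\bigr)$ for nested convex $D\subset D'$ at $\mu$-a.e.\ $x$, proved by a further covering argument), and that is precisely the part your proposal leaves unproved.
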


\par The definition of {\it tangent measures} was originally introduced by Preiss \cite{Preiss} and is relevant for studying
 the local behaviour (or blow-up) of a given
measure. Here we give an adaptation of this notion given by Rindler \cite{rin1} (see also \cite{AmbrosioFuscoPallara00}).

\begin{defin}[Tangent measures on convex sets] \label{tan} 
Let $D\subset \R{N}$ be a bounded, convex, open set containing the origin. 
Given $\mu\in \cM (\Omega; \R{d})$ and  $x_0\in \Omega$   we define the measure $T_{*}^{(x_0,\delta)}\mu\in \cM (\overline{D}; \R{d})$, for any $\delta >0$,  by
$$<T_{*}^{(x_0,\delta)}\mu,\varphi>= \int_{{D(x_0,\delta)}} \varphi \left(\frac{x-x_0}{\delta}\right)\, d\mu, \quad 
\text{for any $\varphi\in C(\overline{D}; \R{d})$}.$$
The tangent space of  $\mu$ at $x_0$, $\Tan_{D}(\mu,x_0)$, consists of all the $\R{d}$-values measures 
$\nu \in \cM (\overline{D}; \R{d})$ which are  the weak$^*$-limit  of a rescaled sequence of measures
of the type 
$$\frac{T_{*}^{(x_0,\delta_n)}\mu}{|\mu|(D(x_0,\delta_n))}$$
for some infinitesimal sequence $\{\delta_n\}_{n\in \N{}}$.
\end{defin}

\noindent In the conditions of Definition \ref{tan}, from an adaptation of Theorem 2.44 in \cite{AmbrosioFuscoPallara00} to convex sets, it follows that 
$$
\Tan_{D}(\mu,x_0)=\frac{d\mu}{d|\mu|}(x_0)\cdot \Tan_{D}(|\mu|,x_0),
$$
whenever $\frac{d\mu}{d|\mu|}(x_0)$ exists and it is finite.
  
As noted in \cite{BCMS}, the following result holds
\begin{lemma}\label{lemmarin} 
Let $\mu\in {\cM} (\Omega; \R{d})$. Then there exists $E\subset \Omega$
with $|\mu|(\Omega\setminus E)=0$ such that for all $x_0\in E$, given $D\subset \R{N}$  an open convex set containing the origin,
there exist $\tau\in \Tan_{D}(\mu,x_0)$ with $|\tau|(D)=1$ and $|\tau|(\partial D)=0.$
\end{lemma}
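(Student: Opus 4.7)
My plan is to construct the tangent measure $\tau$ by a blow-up of $\mu$ at $|\mu|$-a.e.\ point, selecting the rescaling radii so that the scaled boundary of $D$ has no $|\mu|$-mass, and, if necessary, iterating the blow-up to eliminate any residual concentration on $\partial D$.

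First I would apply Theorem~\ref{general} to obtain a Borel set $E_0 \subset \Omega$ with $|\mu|(\Omega\setminus E_0)=0$ on which the polar $\theta(x_0):=\frac{d\mu}{d|\mu|}(x_0)$ exists and $|\theta(x_0)|=1$. For $x_0 \in E_0$ consider the rescalings
$$
\sigma_\delta := \frac{T_*^{(x_0,\delta)}|\mu|}{|\mu|(D(x_0,\delta))}, \qquad \nu_\delta := \frac{T_*^{(x_0,\delta)}\mu}{|\mu|(D(x_0,\delta))}.
$$
The dilated boundaries $\{x_0+\delta\,\partial D\}_{\delta>0}$ are pairwise disjoint, so only countably many $\delta$ give positive $|\mu|$-mass to $\partial D(x_0,\delta)$; I extract $\delta_n\to 0$ along which $|\mu|(\partial D(x_0,\delta_n))=0$, so that $\sigma_{\delta_n}(\overline D)=1$. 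Weak-$*$ compactness in $\cM(\overline D)$ yields, up to subsequence, $\sigma_{\delta_n}\wsto \sigma$, and testing against $1\in C(\overline D)$ forces $\sigma(\overline D)=1$. A standard Lebesgue-point argument for $\theta$ with respect to $|\mu|$ gives $\nu_{\delta_n}\wsto \theta(x_0)\,\sigma$, so $\tau_1:=\theta(x_0)\,\sigma\in\Tan_D(\mu,x_0)$ and $|\tau_1|=\sigma$.

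The main obstacle is that $\sigma$ may still charge $\partial D$: interior mass can migrate to the boundary in the weak-$*$ limit. To cure this I iterate the blow-up. Restricting to a further full-$|\mu|$-measure subset $E \subset E_0$, density arguments prevent $\sigma$ from being entirely supported on $\partial D$, so I fix $y_0\in D\cap\supp\sigma$ and $r_k\to 0$ with $\sigma(\partial D(y_0,r_k))=0$ and $\overline{D(y_0,r_k)}\subset D$. The rescalings
$$
\tilde\sigma_k := \frac{T_*^{(y_0,r_k)}\sigma}{\sigma(D(y_0,r_k))}
$$
are supported strictly inside $D$ and have, up to subsequence, a weak-$*$ limit $\tilde\sigma$ with $\tilde\sigma(D)=1$ and $\tilde\sigma(\partial D)=0$. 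A diagonal argument in the two scales $(\delta_n,r_k)$---the main technical hurdle, in the spirit of Preiss---identifies $\tilde\sigma$ with an element of $\Tan_D(|\mu|,x_0)$, after which $\tau:=\theta(x_0)\,\tilde\sigma\in\Tan_D(\mu,x_0)$ satisfies $|\tau|(D)=1$ and $|\tau|(\partial D)=0$.
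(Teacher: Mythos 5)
The paper does not prove this lemma itself; it refers the reader to \cite[Lemma~3.1]{rin1}, so the comparison here is against the standard argument in the literature rather than a proof in the text.

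Your first step is sound: Besicovitch's theorem gives a full-measure set on which the polar exists, and choosing $\delta_n$ to avoid the (at most countably many) radii with $|\mu|(\partial D(x_0,\delta))>0$ gives, after extracting, a $\sigma\in\Tan_D(|\mu|,x_0)$ with $\sigma(\overline D)=1$. The gap lies in the fix for the case $\sigma(\partial D)>0$. First, the claim that the second-generation rescalings $\tilde\sigma_k$ are ``supported strictly inside $D$'' is false: by Definition~\ref{tan}, $T_*^{(y_0,r_k)}\sigma$ is the push-forward of $\sigma\restriction_{D(y_0,r_k)}$ under $x\mapsto (x-y_0)/r_k$, whose image is all of $D$, so each $\tilde\sigma_k$ is a probability measure on $\overline D$, and the limit $\tilde\sigma$ can charge $\partial D$ by exactly the mechanism you are trying to eliminate. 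Second, centering the inner blow-up at $y_0\neq 0$ is an essential departure from Definition~\ref{tan}: undoing the two changes of variables, the composite rescaling is a blow-up of $\mu$ at the moving points $x_0+\delta_n y_0$, not at $x_0$. Identifying the diagonal limit as an element of $\Tan_D(\mu,x_0)$ would require a Preiss-type locality theorem ($\Tan(\Tan(\mu,x_0),y_0)\subset\Tan(\mu,x_0)$), which is a substantial result, is only valid $|\mu|$-a.e., and is formulated for full-space tangent measures with a different normalization; a ``diagonal argument'' alone does not provide it. Finally, ``density arguments prevent $\sigma$ from being entirely supported on $\partial D$'' is asserted but is precisely the nontrivial point that determines the full-measure set $E$.

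The standard and much more elementary route, which you should replace the iteration with, is to rescale the radii rather than shift the center. If $\sigma=\lim_n\sigma_{\delta_n}$ has $\sigma(\partial D)>0$, choose $r\in(0,1)$ with $\sigma(\partial(rD))=0$ and $\sigma(rD)>0$, and set $\delta_n':=r\delta_n$. A direct computation (using the portmanteau theorem on the $\sigma$-continuity set $rD$) shows that $\dfrac{T_*^{(x_0,\delta_n')}|\mu|}{|\mu|(D(x_0,\delta_n'))}$ converges weakly-$*$ to the push-forward of $\sigma\restriction_{rD}/\sigma(rD)$ under $y\mapsto y/r$, which is an element of $\Tan_D(|\mu|,x_0)$ with total mass $1$ on $D$ and no mass on $\partial D$; multiplying by $\frac{d\mu}{d|\mu|}(x_0)$ gives the desired $\tau$. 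The one remaining point, and the genuine reason one must pass to a full-measure set $E$, is to guarantee $\sigma(rD)>0$ for some $r<1$: this follows from the fact that $|\mu|$ is asymptotically doubling at $|\mu|$-a.e.\ point, i.e.\ there is a sequence of radii along which $|\mu|(D(x_0,\delta_n))/|\mu|(D(x_0,\delta_n/2))$ stays bounded, which forces $\sigma\bigl(\tfrac12\overline D\bigr)>0$. This doubling input is absent from your proposal and cannot be replaced by ``density arguments''.
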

\begin{remark}\label{suc-front}
\begin{itemize}
\item[(i)] The measure $\tau$ in Lemma \ref{lemmarin} is defined in an open set $U$ containing $\overline{D}$, which will 
be useful for regularization purposes.  
Moreover we have that ${\cal A}\tau=0$ in $U$ whenever ${\cal A}\mu=0$ in $\Omega$.
 \item[(ii)] Given $\Lambda \in \cM^{+}(\overline{D})$ it is possible to find a sequence $\delta_n$ with
 $\Lambda (\partial D(x_0,\delta_n))=0$ such that 
$$
\frac{T_{*}^{(x_0,\delta_n)}\mu}{|\mu|(D(x_0,\delta_n))}\wsto \tau.
$$
\end{itemize}
\end{remark}
We refer the reader to the proof of Lemma 3.1 in \cite{rin1}, from which Lemma \ref{lemmarin} and Remark \ref{suc-front} are derived.


\subsection{Remarks on $\cA$-quasiconvexity}
\par We start by recalling  the notion and some  properties of $\cA$-quasiconvex functions. This notion was   introduced by  Dacorogna \cite{Dac} following the works of Murat and Tartar 
in compensated compacteness (see \cite{Mur_84} and \cite {T}) and was further
devoloped by Fonseca and M\"{u}ller \cite{FM}  (see also Braides, Fonseca, and Leoni \cite{BFL}).
\par Let $\cA: \cD'(\Omega;\R{d}) \to  \cD'(\Omega;\R{M})$ be the first order linear differential operator defined in \eqref{operator}. 
\begin{defin}[$\cA$-quasiconvex function]\label{aquasi} 
A locally bounded Borel function $f:\R{d}\to \R{}$ is said to be  $\cA$-quasiconvex if
\[
f(v)\leq \int_Q f(v+w(x))\, \de x
\]
 for all $v\in \R{d}$ and for all  $w\in C_{Q-\per}^\infty(\R{N};\R{d})$
such that $\cA w=0$ in $\R{M}$ with $\int_Q w(x)\, \de x=0$.
\end{defin}
\begin{defin}[Characteristic cone of $\cA$]\label{355}
The characteristic cone of $\cA$ is defined by
\be\label{356}
\cC:=\left\{v\in\R{d}:\exists w\in\R{N}\setminus\{0\}, \left(\sum_{i=1}^N A^{(i)}w_i\right)v=0\right\}.
\ee
\end{defin}
Given $v\in\R{d}$, we define the linear subspace of $\R{N}$
$$
\cV_v:=\left\{w\in\R{N}:\left(\sum_{i=1}^N A^{(i)}w_i\right)v=0\right\}.
$$
If $v\notin\cC$, then $\cV_v=\{0\}$, otherwise $\cV_v$ is a non trivial subspace of $\R{N}$.

\par The following general result can be found in \cite[Lemma~2.14]{FM}. 
We will use it in the sequel applied to smooth functions.
\begin{proposition}\label{projection}
Given  $q>1$ there exists a bounded linear operator $\cP: L_{Q-\per}^q (\R{N};\R{d}) \to L_{Q-\per}^q (\R{N};\R{d})$ such that $\cA(\cP u)=0$. 
Moreover, $\int_Q \cP u=0$ and there exists a constant $C>0$ such that
$$\left|\left| u- \cP u \right|\right|_{L^q} \leq C \|\cA u\|_{W^{-1,q}}$$
for every $u \in L_{Q-\per}^q (\R{N};\R{d})$ with $\int_Q u=0.$
\end{proposition}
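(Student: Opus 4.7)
The plan is to construct $\cP$ as a Fourier multiplier operator on the torus, exploiting the constant rank hypothesis $(H_1)$ to ensure the symbol is smooth enough to apply a Mihlin-type multiplier theorem.

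Step 1 (Symbol of $\cA$ and its kernel projection). For a $Q$-periodic function $u$ expand in Fourier series $u(x)=\sum_{k\in\Z^N}\hat u(k)e^{2\pi i k\cdot x}$. Applying the differential operator yields $\widehat{\cA u}(k)=\mathbb{A}(k)\hat u(k)$ where $\mathbb{A}(k):=2\pi i\sum_{j=1}^N A^{(j)}k_j\in\M{M\times d}$. By $(H_1)$, for every $k\in\R{N}\setminus\{0\}$ the matrix $\mathbb{A}(k)$ has constant rank $c$, so $\ker\mathbb{A}(k)\subset\R{d}$ is a smoothly varying subspace of constant dimension $d-c$ (this is the key consequence of constant rank: the kernel depends smoothly on $k$ on the unit sphere, and by homogeneity on all of $\R{N}\setminus\{0\}$). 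Let $\pi(k)\in\R{d\times d}$ denote the orthogonal projection onto $\ker\mathbb{A}(k)$, extended by $\pi(0):=0$.

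Step 2 (Definition of $\cP$). For $u\in L^q_{Q-\per}(\R{N};\R{d})$, define formally
\[
\cP u(x):=\sum_{k\in\Z^N\setminus\{0\}}\pi(k)\hat u(k)\,e^{2\pi i k\cdot x}.
\]
Since $\pi(0)=0$, we have $\int_Q\cP u=0$; since $\mathbb{A}(k)\pi(k)=0$ for every $k$, we have $\cA(\cP u)=0$ in the distributional sense. The whole issue is thus the $L^q$-boundedness, together with the estimate in terms of $\|\cA u\|_{W^{-1,q}}$.

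Step 3 ($L^q$ continuity via Mihlin). The symbol $\pi$ is positively homogeneous of degree $0$ and $C^\infty$ on $\R{N}\setminus\{0\}$ (this uses constant rank crucially: one can write $\pi(k)=I-\mathbb{A}(k)^*[\mathbb{A}(k)\mathbb{A}(k)^*\!\restriction_{\mathrm{ran}\,\mathbb{A}(k)}]^{-1}\mathbb{A}(k)$, where the inverse is taken on the range, and the constant rank makes this depend smoothly on $k$). Consequently, for every multi-index $\alpha$,
\[
|\partial^\alpha\pi(k)|\leq C_\alpha|k|^{-|\alpha|}\quad\text{for all }k\neq0.
\]
The torus version of the Mihlin-Hörmander multiplier theorem (equivalently, transference from the $L^q(\R{N})$ multiplier theorem) then yields $\|\cP u\|_{L^q(Q)}\leq C\|u\|_{L^q(Q)}$ for every $1<q<\infty$.

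Step 4 (The estimate on $u-\cP u$). On $(\ker\mathbb{A}(k))^\perp$, the restriction of $\mathbb{A}(k)$ is injective onto $\ran\mathbb{A}(k)$; let $\mathbb{A}(k)^\dagger$ denote its Moore-Penrose pseudoinverse, so that $\mathbb{A}(k)^\dagger\mathbb{A}(k)=I-\pi(k)$. By constant rank, $\mathbb{A}(k)^\dagger$ is smooth on $\R{N}\setminus\{0\}$ and positively homogeneous of degree $-1$. For $u$ with $\int_Q u=0$, we can write
\[
\widehat{u-\cP u}(k)=(I-\pi(k))\hat u(k)=\mathbb{A}(k)^\dagger\widehat{\cA u}(k),\qquad k\neq 0.
\]
Factor $\mathbb{A}(k)^\dagger=m(k)\cdot(1+|k|^2)^{-1/2}$, where $m(k):=\mathbb{A}(k)^\dagger(1+|k|^2)^{1/2}$ is a smooth symbol of order $0$ (again because $\mathbb{A}(k)^\dagger$ is homogeneous of degree $-1$ and smooth off the origin, and away from $k=0$ the factor $(1+|k|^2)^{1/2}$ behaves like $|k|$). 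Since $(1+|k|^2)^{-1/2}$ is precisely the Fourier multiplier of the Bessel-type operator $(I-\Delta)^{-1/2}$ that defines the norm of $W^{-1,q}$, another application of Mihlin (to $m$) gives
\[
\|u-\cP u\|_{L^q}\leq C\|m(D)(I-\Delta)^{-1/2}\cA u\|_{L^q}\leq C'\|\cA u\|_{W^{-1,q}}.
\]

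The main obstacle is Step 3, or rather the verification that the constant rank condition genuinely yields smoothness of $\pi$ together with the Mihlin derivative bounds — this is the only place where $(H_1)$ is indispensable (without it, $\pi(k)$ can fail to be continuous at points where the rank of $\mathbb{A}(k)$ drops, and no $L^q$ bound can hold). Once smoothness and homogeneity of the symbol are established, the boundedness in Steps 3 and 4 follows from the classical singular-integral machinery (Mihlin-Hörmander on the torus).
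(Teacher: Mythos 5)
The paper does not actually prove this proposition; it cites \cite[Lemma~2.14]{FM}, whose argument is precisely the one you reproduce: realize $\cP$ as the Fourier multiplier with symbol the orthogonal projection $\pi(k)$ onto $\ker\mathbb{A}(k)$, note that $\pi$ is smooth off the origin and homogeneous of degree zero thanks to the constant-rank hypothesis, and invoke the Mihlin--H\"ormander theorem on the torus, with the estimate for $u-\cP u$ coming from the degree $-1$ pseudoinverse symbol $\mathbb{A}(k)^\dagger$. Your proof is therefore correct and coincides with the cited one; the only technical point worth flagging is that since the nonzero lattice points satisfy $|k|\geq 1$, one should smoothly truncate $\pi$ and $m$ near $k=0$ before applying the multiplier theorem, which is harmless and standard.
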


\subsection{Other remarks}



 Given a continuous function $f \in C(\Omega \times \R{d})$ we start by defining the linear mapping 
$T:C(\Omega \times \R{d}) \to C(\Omega \times B)$ by 
\begin{equation}\label{T}
Tf(x,\xi)=(1-|\xi|)f\left(x,\frac{\xi}{1-|\xi|}\right)
\end{equation}
and we introduce the set 
$$E(\Omega \times \R d):=\left\{f \in C(\Omega \times \R d): Tf \text{ has a continuous extension to} 
\,\, C(\overline{\Omega \times B}) \right\}.$$
For $f\in E(\Omega \times \R d)$,    $f^{\infty}$ represents the continuous extension of $Tf$, precisely, 
\begin{equation*}
f^{\infty}(x,\xi):= \lim_{\tiny \begin{array}{c}
 x^{'}\to x\\
\xi^{'}\to \xi\\
 t \to \infty
\end{array}
} \frac{f(x',t\xi')}{t}
\end{equation*}
\begin{remark}
Note that definition \eqref{104} is a generalization of this definition of recession function for  $\Phi: \R{d} \to \R{}$ with linear growth.
\end{remark}
\begin{theorem}[Reshetnyak's Continuity Theorem {\cite[Theorem~5]{KriRin_CV_10}}]\label{Reshetnyak}
Let $f\in E(\Omega;\R{d})$ and let  $\mu,\,\mu_n\in \cM (\Omega; \R{d})$ be such that
$\mu_n  
\wsto \mu$  in  $\cM (\Omega; \R{d})$   and  $\langle\mu_n\rangle(\Omega)
 \to \langle\mu\rangle (\Omega),$  where 
$$\langle\nu\rangle
:=\sqrt{1+|\nu^a|^2}\cL^N+|\nu^s|, \quad \nu=\nu^a \cL^N +\nu^s\in \cM (\Omega; \R{d}).$$
Then
$$\lim_{n\to \infty}\widehat{\cF}(\mu_n)= \widehat{\cF}(\mu)$$
where
\begin{equation}\label{Fhat}
\widehat\cF(\nu):= \int_\Omega f(x,\nu^a(x))\, dx 
+\int_\Omega f^\infty\left(x, 
\frac{d\nu^s}{d|\nu^s|}(x)\right) 
\, d|\nu^s|,\quad \nu\in \cM (\Omega; \R{d}).
\end{equation}
\end{theorem}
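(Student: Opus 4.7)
My plan is to follow the standard lifting/compactification approach. To each $\nu\in\cM(\Omega;\R{d})$ I would associate a positive measure $\bar\nu$ on the compact space $\overline{\Omega}\times\overline{B}$ (with $B$ the open unit ball of $\R{d}$), so that $\widehat\cF(\nu)$ becomes representable as the pairing of $\bar\nu$ with a single continuous test function built from $Tf$ via the hypothesis $f\in E(\Omega\times\R{d})$. The area-strict assumption $\langle\mu_n\rangle(\Omega)\to\langle\mu\rangle(\Omega)$ is then exactly what is needed to upgrade $\mu_n\wsto\mu$ on $\Omega$ to weak-$*$ convergence $\bar\mu_n\wsto\bar\mu$ on $\overline{\Omega}\times\overline{B}$, after which the conclusion is immediate.

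Concretely, for $\nu\in\cM(\Omega;\R{d})$ I set
$$\int\Phi\,d\bar\nu:=\int_\Omega\Phi\!\left(x,\tfrac{\nu^a(x)}{1+|\nu^a(x)|}\right)(1+|\nu^a(x)|)\,dx+\int_\Omega\Phi\!\left(x,\tfrac{d\nu^s}{d|\nu^s|}(x)\right)d|\nu^s|,$$
for $\Phi\in C(\overline{\Omega}\times\overline{B})$. Let $(Tf)^{\ast}$ denote the unique continuous extension of $Tf$ to $\overline{\Omega}\times\overline{B}$ guaranteed by $f\in E(\Omega\times\R{d})$, which agrees with $f^\infty$ on $\{|\xi|=1\}$. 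A direct computation from the definition \eqref{T} of $T$ yields $(Tf)^{\ast}(x,v/(1+|v|))\,(1+|v|)=f(x,v)$ for every $v\in\R{d}$, so that
$$\widehat\cF(\nu)=\int_{\overline{\Omega}\times\overline{B}}(Tf)^{\ast}\,d\bar\nu.$$
Once $\bar\mu_n\wsto\bar\mu$ in $\cM^+(\overline{\Omega}\times\overline{B})$ is established, the compactness of the lifted domain and the continuity of $(Tf)^{\ast}$ on it yield $\widehat\cF(\mu_n)\to\widehat\cF(\mu)$ at once.

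The main obstacle is establishing this weak-$*$ convergence of the lifted measures. By Stone--Weierstrass it suffices to test against products $\varphi(x)\psi(\xi)$, and the total masses $\bar\mu_n(\overline{\Omega}\times\overline{B})=\int(1+|\mu_n^a|)\,dx+|\mu_n^s|(\Omega)$ are comparable to $\langle\mu_n\rangle(\Omega)$ via the elementary bounds $\sqrt{1+|v|^2}\asymp 1+|v|$, so they converge by hypothesis. The crux is to show that area-strict convergence forces the \emph{separate} strict convergences $\mu_n^a\to\mu^a$ strongly in $L^1(\Omega;\R{d})$ and $|\mu_n^s|(\Omega)\to|\mu^s|(\Omega)$; this relies on the strict convexity of $v\mapsto\sqrt{1+|v|^2}$ (to preclude oscillations in the absolutely continuous part) together with the fact that the gap $\sqrt{1+|v|^2}-|v|$ is non-negative and vanishes only at infinity (to preclude any limiting mass exchange between the absolutely continuous and singular parts). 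Given these separate convergences, the pairing against $\varphi\otimes\psi$ splits and passes to the limit: the absolutely continuous contribution by dominated convergence along an a.e.\ convergent subsequence of $\mu_n^a$, and the singular one via Besicovitch differentiation (Theorem \ref{general}) applied to blow-ups of $\mu_n^s$, combined with the strict mass convergence $|\mu_n^s|(\Omega)\to|\mu^s|(\Omega)$.
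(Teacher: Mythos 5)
The paper does not prove this theorem; it cites it as \cite[Theorem~5]{KriRin_CV_10} (Kristensen--Rindler), so there is no internal proof to compare against. I will therefore assess your argument on its own.

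Your general plan --- lift $\nu$ to a measure on the compactified space $\overline{\Omega}\times\overline{B}$, rewrite $\widehat\cF$ as the pairing of that lift against the single test function $(Tf)^{\ast}$, and then reduce the theorem to a weak-$*$ convergence statement for the lifted measures --- is indeed the standard and correct approach, and your identity $(Tf)^{\ast}(x,v/(1+|v|))(1+|v|)=f(x,v)$ is fine. However, the step you yourself call the crux is wrong. Area-strict convergence does \emph{not} imply the separate convergences $\mu_n^a\to\mu^a$ in $L^1$ and $|\mu_n^s|(\Omega)\to|\mu^s|(\Omega)$. Take $\Omega=(0,1)$, $\mu=\delta_{1/2}$ and $\mu_n=n\,\mathbf{1}_{(1/2-1/(2n),1/2+1/(2n))}\cL^1$. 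Then $\mu_n\wsto\mu$, and
\begin{equation*}
\langle\mu_n\rangle(\Omega)=\frac{\sqrt{1+n^2}}{n}+\Bigl(1-\frac1n\Bigr)\;\longrightarrow\;2=\langle\mu\rangle(\Omega),
\end{equation*}
so the hypotheses of the theorem hold; yet $\|\mu_n^a\|_{L^1}=1\not\to 0=\|\mu^a\|_{L^1}$ and $|\mu_n^s|(\Omega)=0\not\to 1=|\mu^s|(\Omega)$. The heuristic that the gap $\sqrt{1+|v|^2}-|v|$ prevents limiting mass exchange between the absolutely continuous and singular parts is exactly backwards: area-strict convergence is designed to be compatible with such exchange, and that is precisely why the theorem is nontrivial. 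A secondary weakness is that the total mass of your lifted measure is $\int_\Omega(1+|\mu_n^a|)\,\de x+|\mu_n^s|(\Omega)$, which is only comparable --- not equal --- to $\langle\mu_n\rangle(\Omega)$; convergence of one does not transfer to the other.

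The correct route does not decouple the absolutely continuous and singular parts. One works with the $\R{1+d}$-valued measure $\gamma_n:=(\cL^N,\mu_n)$, for which $|\gamma_n|=\langle\mu_n\rangle$, so that your hypotheses become $\gamma_n\wsto\gamma:=(\cL^N,\mu)$ together with $|\gamma_n|(\Omega)\to|\gamma|(\Omega)$, i.e.\ genuine strict convergence of vector measures. One then applies the classical Reshetnyak continuity theorem for strict convergence to the perspective integrand $g(x,(\zeta_0,\zeta))=\zeta_0 f(x,\zeta/\zeta_0)$ for $\zeta_0>0$, extended by $g(x,(0,\zeta))=f^\infty(x,\zeta)$, the membership $f\in E(\Omega\times\R{d})$ being exactly what guarantees that $g$ extends continuously to $\overline{\Omega}\times(\cS^d\cap\{\zeta_0\geq 0\})$. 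In that formulation no separation into $\mu^a$ and $\mu^s$ is needed, and the mass-transfer phenomenon in the counterexample above is handled automatically because the polar vector $d\gamma_n/d|\gamma_n|$ drifts continuously toward $\{\zeta_0=0\}$ as concentration develops.
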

The following upper semicontinuity result which is a consequence of Theorem \ref{Reshetnyak} will be useful in the proof of our main result Theorem \ref{main}. For a proof see \cite[Corollary 2.11]{BCMS}.
\begin{proposition} \label{usc}
Let $f: \Omega \times \R{d} \to \R{}$ be a continuous function such that there exists $C>0$ with
$$|f(x,\xi)|\leq C(1+|\xi|),\,\, \text{ for all  $(x,\xi) \in \Omega \times \R{d}$.}$$
Let  $\mu,\,\mu_n\in \mathcal M (\Omega; \R{d})$ be such that
$\mu_n \wsto \mu$  in  $\mathcal M (\Omega; \R{d})$   and  $\langle\mu_n\rangle(\Omega) \to \langle\mu\rangle (\Omega).$
Then
\begin{equation*}
\widehat\cF(\mu)\geq \limsup_{n\to \infty} \widehat\cF(\mu_n)
\end{equation*}
where $\widehat\cF$ was defined in \eqref{Fhat}.
\end{proposition}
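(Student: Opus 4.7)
The plan is to prove the identity $\cF(\mu) = \cF_{\cA-\hom}(\mu)$ by matching upper and lower bounds, following the Fonseca--M\"uller blow-up paradigm adapted to the linear-growth setting, and extending the $p>1$ homogenization result of~\cite{BFL} through tangent-measure techniques borrowed from~\cite{rin1,BCMS}. Before either inequality I would establish that $f_{\cA-\hom}$ is Lipschitz, $\cA$-quasiconvex and satisfies~\eqref{200} up to new constants, and that $f_{\cA-\hom}^\infty$ (defined via~\eqref{104}) is well-defined, positively $1$-homogeneous, $\cA$-quasiconvex, and admits an analogous cell-formula representation with $f^\infty$ in place of $f$. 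These properties follow from manipulating the cell problem with the projection operator $\cP$ of Proposition~\ref{projection}, which lets one restore $\cA$-freeness after cut-and-paste operations at cost controlled in $L^1$.

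\textbf{Upper bound.} I would exhibit, for each $\mu$, an admissible sequence $\{u_n\}$ with $\liminf \int_\Omega f(\cdot/\eps_n, u_n)\,\de x \le \cF_{\cA-\hom}(\mu)$. First treat $\mu = b\,\cL^N$ with constant $b$: pick $R\in\N{}$ and a quasi-minimizer $w$ of the cell problem defining $f_{\cA-\hom}(b)$; then $u_n(x) := b + w(x/\eps_n)$ lies in $\ker\cA$, weak-* converges to $b\,\cL^N$ by Riemann--Lebesgue, and $\int_\Omega f(x/\eps_n, u_n)\,\de x \to |\Omega|\,\ave_{RQ} f(x, b+w)\,\de x$. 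For a general $u^a$, approximate it in $L^1$ by piecewise constants, run the previous construction on each piece on a disjoint cube decomposition, and apply $\cP$ to restore $\cA$-freeness of the glued field. For $\mu^s$, apply Lemma~\ref{lemmarin} to select at $|\mu^s|$-a.e.\ point $x_0$ a tangent measure of unit mass, use the cell formula for $f_{\cA-\hom}^\infty$ scaled by the density $\frac{\de\mu^s}{\de|\mu^s|}(x_0)$, and glue the local recovery sequences via a Vitali covering, cutoff functions, and $\cP$. Proposition~\ref{usc} and Theorem~\ref{Reshetnyak} then pass to the limit in the outer approximations.

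\textbf{Lower bound.} Given any admissible $\{u_n\}$ with $\liminf \int_\Omega f(x/\eps_n, u_n)\,\de x < \infty$, define the positive measures $\nu_n := f(\cdot/\eps_n, u_n)\,\cL^N$; they are uniformly bounded by~\eqref{200} combined with the tightness of $|u_n|\wsto\Lambda$. Extract a weak-* limit $\nu\in\cM^+(\overline\Omega)$; since $\Lambda(\partial\Omega)=0$, one has $\nu(\Omega) \le \liminf_n \nu_n(\Omega)$. By Theorem~\ref{general}, differentiate $\nu$ with respect to $\cL^N + |\mu^s|$, reducing matters to two blow-up statements:
\[
\frac{\de\nu}{\de\cL^N}(x_0) \ge f_{\cA-\hom}\bigl(u^a(x_0)\bigr) \quad \text{for $\cL^N$-a.e. } x_0, \qquad
\frac{\de\nu}{\de|\mu^s|}(x_0) \ge f_{\cA-\hom}^\infty\Bigl(\frac{\de\mu^s}{\de|\mu^s|}(x_0)\Bigr) \quad \text{for $|\mu^s|$-a.e. } x_0.
\]
At a Lebesgue point $x_0$ of $u^a$, rescale $u_n$ on $Q(x_0, r)$, diagonalize $r\downarrow 0$ against $\eps_n\downarrow 0$ so that $r/\eps_n \to \infty$ (so the rescaled domain contains arbitrarily many periods), correct by $\cP$ to obtain a field in $\ker\cA$ with zero mean on a large multiple of $Q$, and recognize the resulting quantity as an admissible competitor for $f_{\cA-\hom}(u^a(x_0))$. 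At a singular point, use Lemma~\ref{lemmarin} (and Remark~\ref{suc-front}(ii) to avoid boundary masses) to extract a tangent measure $\tau$ with $|\tau|(D)=1$ and $|\tau|(\partial D)=0$, rescale both $u_n$ and the reference measure, and carry out the analogous diagonal argument against $f_{\cA-\hom}^\infty$; the $\cA$-freeness of the limit is guaranteed by Remark~\ref{suc-front}(i).

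\textbf{Main obstacle.} The core difficulty lies in the singular-part blow-up, where three scales must be coordinated: the homogenization parameter $\eps_n$, the blow-up radius $r$, and the normalization $|\mu|(D(x_0, r))$ that diverges. The diagonalization has to be slow enough that the rescaled field sees many periods of $f(\cdot, \zeta)$, yet fast enough that the concentration mass at $x_0$ is captured by the recession $f_{\cA-\hom}^\infty$ rather than dispersed; simultaneously, $\cP$ must be applied after each cutoff to preserve $\cA$-freeness without destroying the weak-* convergences $u_n \wsto \mu$ and $|u_n| \wsto \Lambda$. Reshetnyak continuity and Proposition~\ref{usc} are the tools that justify passing to the limit in the outer, two-level nested approximations (Vitali covers, piecewise-constant approximation of $u^a$, quasi-minimizers of the cell problem), and must be applied with care because $f$ need not be continuous in $x$, so a preliminary continuous truncation/mollification in the first variable is also needed.
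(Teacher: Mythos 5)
Your proposal does not address Proposition \ref{usc}. You have sketched a strategy for the main representation theorem, Theorem \ref{main} (the identity $\cF(\mu)=\cF_{\cA-\hom}(\mu)$), complete with upper and lower bounds, blow-up at Lebesgue and singular points, cell formulas, the projection operator $\cP$, and tangent measures. Proposition \ref{usc} is an entirely different and much more elementary statement: it asserts upper semicontinuity of the \emph{fixed} functional
\[
\widehat\cF(\nu)=\int_\Omega f(x,\nu^a(x))\,\de x+\int_\Omega f^\infty\!\left(x,\frac{\de\nu^s}{\de|\nu^s|}(x)\right)\de|\nu^s|
\]
along sequences $\mu_n\wsto\mu$ satisfying the area-functional convergence $\langle\mu_n\rangle(\Omega)\to\langle\mu\rangle(\Omega)$, for a continuous $f$ of linear growth. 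There is no homogenization, no differential constraint, no cell problem and no diagonalization in it, and the integrand $f$ is a general function of $(x,\xi)$, not the $\cA$-homogenized density.

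What the statement actually calls for is a short deduction from Theorem \ref{Reshetnyak}, which is exactly what the paper indicates by citing Corollary~2.11 of \cite{BCMS}: approximate $f$ monotonically from above by a decreasing sequence $\{f_k\}\subset E(\Omega\times\R{d})$ with the same linear bound, so that each $\widehat\cF_k$ is continuous along such sequences by Reshetnyak's Continuity Theorem; since $f_k\geq f$ and $f_k^\infty\geq f^\infty$ one gets $\widehat\cF_k(\mu)=\lim_n\widehat\cF_k(\mu_n)\geq\limsup_n\widehat\cF(\mu_n)$; finally, $\inf_k\widehat\cF_k(\mu)=\widehat\cF(\mu)$ by monotone convergence, so the upper semicontinuity follows as for an infimum of continuous functionals. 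Your write-up not only omits this argument but in fact invokes Proposition \ref{usc} as a tool inside its ``upper bound'' step, which would be circular if Proposition \ref{usc} itself were what you were asked to prove.
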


\subsection{Preliminary results}

 Fix some sequence $\{ \eps_n \}$ and let $\cO(\Omega)$ denote the collection of all open subsets of $\Omega$.
For $A\in\cO(\Omega)$, the localized version of $\cF$, denoted by $\cF_{\{\eps_n\}}(\mu; A)$, is defined by
\be\label{202}
\cF_{\{\eps_n\}} (\mu;A) := \inf \left\{ \liminf_{n\to\infty} \int_A f\left(\frac{x}{\eps_n},u_n(x)\right)\,\de x,\begin{array}{l}
\{u_n\}\subset\Lp1{}(\Omega;\R{d}),\;\;\cA u_n\stackrel{W^{-1,q}}{\longrightarrow}0, \\
u_n\wsto\mu,\,\, |u_n|\wsto\Lambda,\;\; \Lambda(\partial\Omega)=0
\end{array}\right\}.
\ee 
Note that using the regularization of $\mu$ as test sequence, that is $u_n = \rho_{\eps_n} * \mu$, by \eqref{200} we obtain the upper bound
$$
\mathcal{F}_{\{\eps_n\}} (\mu;A) \leq C \left(|A|+ |\mu|(\overline{A})\right).
$$


We begin by proving the following subadditivity result:
\begin{proposition}[Subadditivity for nested sets]\label{sub}
For any $A\subset\subset B\subset\subset C\subset\Omega$ there holds
\be\label{210}
\cF_{\{ \eps_n^1 \}}(\mu;C)\leq \cF_{\{ \eps_n^1 \}}(\mu;B)+\cF_{\{ \eps_n^1 \}}(\mu;C\setminus\cl A).
\ee
where $\{\eps_n^1\}\subset\{\eps_n\}$ is an appropriate subsequence.
\end{proposition}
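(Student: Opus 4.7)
\smallskip

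\noindent\textbf{Plan of proof.} The argument is a De Giorgi slicing/cut-off scheme: I take nearly optimal sequences on $B$ and on $C\setminus\cl A$ and glue them together on a thin annular layer between $A$ and $B$ chosen so that the transition cost is negligible. The linear growth bound \eqref{200} produces a uniform bound on the transition cost; pigeonholing across many parallel layers gives an average as small as desired. Crucially, because admissible sequences only need $\cA u_n\to 0$ in $W^{-1,q}$ rather than $\cA u_n=0$ exactly, the cut-off construction does not require the projector of Proposition \ref{projection}: the error term produced by the differentiation of the cut-off is absorbed via the compact embedding $\cM\hookrightarrow W^{-1,q}$ recalled in Remark \ref{inclusions}.

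\smallskip

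\noindent\textbf{Step 1 (nearly optimal sequences and common subsequence).} Fix $\eta>0$. I would select $\{u_n\}\subset L^1(\Omega;\R{d})$ admissible in \eqref{202} for the region $B$, with $\cA u_n\to 0$ in $W^{-1,q}$, $u_n\wsto\mu$, $|u_n|\wsto\Lambda_B$ with $\Lambda_B(\partial\Omega)=0$, and
\[
\liminf_{n\to\infty}\int_B f(x/\eps_n,u_n)\,\de x\le \cF_{\{\eps_n\}}(\mu;B)+\eta,
\]
and similarly $\{v_n\}$ admissible for $C\setminus\cl A$ with analogous properties and target $\mu$. A standard diagonal extraction produces a common subsequence $\{\eps_n^1\}\subset\{\eps_n\}$ along which both liminfs become limits, $u_n-v_n\wsto 0$ in $\cM(\Omega;\R{d})$, and $|u_n|,|v_n|$ converge weakly$^*$ to Radon measures charging no part of $\partial\Omega$.

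\smallskip

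\noindent\textbf{Step 2 (slicing and pigeonhole).} Let $\delta:=\dist(\cl A,\partial B)>0$ and fix a large $M\in\N{}$. I would introduce the $M$ nested annuli $A_k:=\{x\in B\colon \dist(x,A)\le k\delta/M\}$, with $A_0=\cl A$ and $A_M\subset\subset B$, together with cut-offs $\varphi_k\in C^\infty_c(B;[0,1])$ equal to $1$ on $A_{k-1}$, vanishing outside $A_k$, and satisfying $|\nabla\varphi_k|\le C M/\delta$. Set
\[
w_n^{(k)}:=\varphi_k u_n+(1-\varphi_k)v_n,
\]
which equals $u_n$ on $A_{k-1}$, equals $v_n$ outside $A_k$, and interpolates on the thin ring $T_k:=A_k\setminus A_{k-1}$. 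Using \eqref{200} and the uniform bound on $\|u_n\|_{L^1(B)}+\|v_n\|_{L^1(C\setminus\cl A)}$,
\[
\sum_{k=1}^{M}\int_{T_k} f(x/\eps_n,w_n^{(k)})\,\de x\le C\int_{B\setminus\cl A}\bigl(1+|u_n|+|v_n|\bigr)\,\de x\le C',
\]
uniformly in $n$. Hence for each $n$ there exists $k_n\in\{1,\dots,M\}$ with $\int_{T_{k_n}} f(x/\eps_n,w_n^{(k_n)})\,\de x\le C'/M$. Since $k_n$ takes finitely many values, a further subsequence (which I continue to denote $\{\eps_n^1\}$) makes $k_n$ constant, say $k_n\equiv k^{*}$.

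\smallskip

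\noindent\textbf{Step 3 (admissibility and conclusion).} It remains to check that $\{w_n^{(k^{*})}\}$ is admissible in \eqref{202} for $C$. Weak$^*$ convergence $w_n^{(k^{*})}\wsto\varphi_{k^{*}}\mu+(1-\varphi_{k^{*}})\mu=\mu$ is immediate, and by passing once more to a subsequence $|w_n^{(k^{*})}|\wsto\Lambda'$ with $\Lambda'(\partial\Omega)=0$ (since on a neighborhood of $\partial\Omega$ the sequence coincides with $v_n$). The only delicate point is asymptotic $\cA$-freeness: a direct computation gives
\[
\cA w_n^{(k^{*})}=\varphi_{k^{*}}\cA u_n+(1-\varphi_{k^{*}})\cA v_n+\sum_{i=1}^{N} A^{(i)}(\partial_i\varphi_{k^{*}})(u_n-v_n).
\]
The first two summands vanish in $W^{-1,q}$. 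For the last summand, $\partial_i\varphi_{k^{*}}$ is a smooth bounded multiplier and $u_n-v_n\wsto 0$ in $\cM(\Omega;\R{d})$, so Remark \ref{inclusions} (compact embedding $\cM\hookrightarrow W^{-1,q}$ for $1<q<N/(N-1)$) yields strong convergence to zero in $W^{-1,q}$. Thus $\cA w_n^{(k^{*})}\to 0$ in $W^{-1,q}$. The splitting of $C$ into $A_{k^{*}-1}\sqcup T_{k^{*}}\sqcup (C\setminus A_{k^{*}})$ and the choice of $k^{*}$ then give
\[
\liminf_{n\to\infty}\int_C f(x/\eps_n^1,w_n^{(k^{*})})\,\de x\le \cF_{\{\eps_n^1\}}(\mu;B)+\cF_{\{\eps_n^1\}}(\mu;C\setminus\cl A)+2\eta+C'/M,
\]
from which \eqref{210} follows by letting $M\to\infty$ and $\eta\to 0$.

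\smallskip

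\noindent\textbf{Main obstacle.} The delicate point is simultaneously controlling the $\cA$-error produced by the cut-off and the transition energy: the derivatives of $\varphi_k$ scale like $M$, which is precisely the parameter made large to shrink the transition cost. The key observation that rescues the construction is that the $\cA$-error lives in $\cM$ (through the product with the bounded multiplier $\partial_i\varphi_{k^{*}}$) and converges weakly$^*$ to $0$, so compactness of $\cM\hookrightarrow W^{-1,q}$ delivers strong convergence for free, regardless of $M$.
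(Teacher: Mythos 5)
Your proof is correct and follows essentially the same De Giorgi cut-off scheme as the paper: two nearly optimal sequences glued on a thin transition layer, with the transition energy controlled and the $\cA$-error from the cut-off derivative absorbed via the compact embedding $\cM\hookrightarrow\hookrightarrow W^{-1,q}$. Where the paper simply posits cut-offs $\phi_j$ satisfying \eqref{212} and asserts that admissibility of $U_{n,j}$ is "not difficult to show", you make both points explicit (the pigeonhole over $M$ parallel slices, and the multiplier/compactness argument for $\cA w_n^{(k^*)}\to 0$); this is the same argument, merely unpacked, and in both cases the final passage to the limit in the small parameters implicitly uses a routine diagonal extraction to land on a single subsequence $\{\eps_n^1\}$.
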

\begin{proof}
Fix $\delta>0$. Consider a subsequence $\{\eps_n^1\}\subset\{\eps_n\}$ and an associated sequence $\{v_n^1\}\subset\Lp1{}(\Omega;\R{d})$ such that $v_n^1 \wsto \mu$, $\cA v_n^1 \to 0$ and
\be\label{211}
\cF_{\{ \eps_n \}}(u;B) + \delta \geq \lim_{n\to+\infty} \int_B f\left(\frac{x}{\eps_n^1},v_n^1\right)\de x, 
\ee
By extracting another subsequence  $\{\eps_n^2\}\subset\{\eps_n^1\}$ there exists a sequence $\{w_n^2\}\subset\Lp1{}(\Omega;\R{d})$ such that $w_n^2 \wsto \mu$, $\cA w_n^2 \to 0$ and
\be\label{211a}
\cF_{\{ \eps_n^1 \}}(u;C \setminus \cl A) + \delta \geq \lim_{n\to+\infty} \int_{C \setminus \cl A}  f\left(\frac{x}{\eps_n^2},w_n^2\right)\de x.
\ee
We denote by $\{ v_n^2 \}$ the subsequence of $\{ v_n^1 \}$ corresponding to $\{ \eps_n^2 \}$.

Consider smooth cut-off functions $\phi_j$ such that, for all $j\in\N{}$, $\phi_j\equiv1$ on $A$, $\phi_j\equiv0$ on $C\setminus\cl B$, and such that
\be\label{212}
\lim_{j \to +\infty} \limsup_{n \to +\infty} \int_{ \{ 0 <  \phi_j < 1 \}} \left(1+ |v_n^2|+|w_n^2| \right) \de x=0.
\ee
Property \eqref{212} ensures that $|\{0<\phi_j<1\}|\to0$ as $j\to\infty$ and that $\{v_n^2\}$ and $\{w_n^2\}$ do not have concentrations in the transition layer of $\phi_j$.

Define $ U_{n,j}:=(1-\phi_j)w_n^2+\phi_j v_n^2$. It is not difficult to show that $\{ U_{n,j}\}\subset L^1(\Omega;\R{d})$, and that $U_{n,j} \wsto \mu$, $\cA U_{n,j}\to0$ in $W^{-1,q}(\Omega;\R{M})$ as $n \to \infty$.
Let $A_j:=\{x\in\Omega:\phi_j(x)\equiv1\}$ and $B_j:=\{x\in\Omega:\phi_j(x)\equiv0\}$ and notice that $A\subset A_j\subset B$ and $C\setminus\cl B\subset B_j\subset C\setminus\cl A$.
We have
\begin{equation*}
\begin{split}
\cF_{\{ \eps_n^1\}}(u;C) \leq & \liminf_{j \to +\infty} \liminf_{n\to+\infty} \int_C f\left(\frac{x}{\eps_n^2},U_{n,j}\right)\de x \\
\leq & \limsup_{j \to +\infty} \limsup_{n\to+\infty} \int_{A_j} f\left(\frac{x}{\eps_n^2},v_n^2\right)\de x + \limsup_{j \to +\infty} \limsup_{n\to+\infty} \int_{B_j} f\left(\frac{x}{\eps_n^2},w_n^2\right)\de x \\
       & +\limsup_{j \to +\infty} \limsup_{n\to+\infty}\int_{\{0<\phi_j<1\}} f\left(\frac{x}{\eps_n^2},U_{n,j} \right)\de x\\
\leq & \lim_{n\to+\infty} \int_{B} f\left(\frac{x}{\eps_n^2},v_n^2\right)\de x + \lim_{n\to+\infty} \int_{C\setminus\cl A} f\left(\frac{x}{\eps_n^2},w_n^2\right)\de x \\
       & +\limsup_{j \to +\infty} \limsup_{n\to+\infty} \int_{\{0<\phi_j<1\}} C(1+|U_{n,j}|)\de x\\
\leq & \cF_{\{ \eps_n^1 \}}(u;B)+\cF_{\{ \eps_n^1 \}}(u;C\setminus\cl A) + 2 \delta
\end{split}
\end{equation*}
where we used that $f\geq0$, \eqref{211}, \eqref{211a}, and \eqref{212}. 
Letting $\delta \to 0$ yields \eqref{210}.
\end{proof}

\begin{proposition}\label{measure}
Given $\mu \in \cM(\Omega;\R{d})$ and a sequence $\{\eps_n\}$, we can find a subsequence $\{ \eps_n^1 \} \subset \{ \eps_n \}$, a sequence $u_n^1 \wsto \mu$, $\cA u_n^1 \to 0$, and a bounded Radon measure $\Phi_{\mu}$ such that 
\be\label{2061}
f \left( \frac {x}{\eps_n^1}, u_n^1(x) \right)\de x \wsto \Phi_\mu.
\ee
Moreover, 
for every open set $A \subset \subset \Omega$, we can find a further subsequence $\{\eps_n^2\} \subset \{ \eps_n^1 \}$ such that
\be\label{206}
\cF_{\{\eps_n^2\}}(\mu;A)=\Phi_\mu(\overline A).
\ee
\end{proposition}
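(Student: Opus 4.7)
The plan is to first build $\Phi_\mu$ via compactness by taking a near-minimizing sequence for $\cF_{\{\eps_n\}}(\mu;\Omega)$, and then, for each open $A\subset\subset\Omega$, to establish the identity $\cF_{\{\eps_n^2\}}(\mu;A)=\Phi_\mu(\overline A)$ by matching an easy upper bound (use that very sequence as a competitor on $A$) against a lower bound obtained by contradiction through the pasting technique of Proposition \ref{sub}.

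For the first assertion, starting from the upper bound $\cF_{\{\eps_n\}}(\mu;\Omega)\le C(|\Omega|+|\mu|(\overline\Omega))<+\infty$ and a diagonal argument, I would produce a test sequence $\{u_n^1\}$ along a subsequence $\{\eps_n^1\}\subset\{\eps_n\}$ with $u_n^1\wsto\mu$, $\cA u_n^1\to0$ in $W^{-1,q}$, $|u_n^1|\wsto\Lambda$ (with $\Lambda(\partial\Omega)=0$), and
\[
\lim_{n\to\infty}\int_\Omega f\bigl(x/\eps_n^1,u_n^1(x)\bigr)\,\de x=\cF_{\{\eps_n\}}(\mu;\Omega).
\]
Combining \eqref{200} with the uniform boundedness of $|u_n^1|$ in $\cM^+(\Omega)$, the measures $f(\cdot/\eps_n^1,u_n^1)\,\de x$ form a bounded family in $\cM^+(\overline\Omega)$; a further extraction (relabeled) yields weak-$*$ convergence to some $\Phi_\mu\in\cM^+(\overline\Omega)$, establishing \eqref{2061}. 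By construction $\Phi_\mu(\overline\Omega)=\cF_{\{\eps_n\}}(\mu;\Omega)$; and since $f\le C_2(1+|\zeta|)$ together with $\Lambda(\partial\Omega)=0$ force $\Phi_\mu(\partial\Omega)=0$, one has $\Phi_\mu(\overline A)+\Phi_\mu(\Omega\setminus\overline A)=\Phi_\mu(\overline\Omega)$ for every open $A\subset\subset\Omega$.

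Given such an $A$, I would extract a further subsequence $\{\eps_n^2\}\subset\{\eps_n^1\}$ along which $\int_A f(x/\eps_n^2,u_n^1)\,\de x$ converges; upper semicontinuity of weak-$*$ convergence on the compact set $\overline A$ then yields $\cF_{\{\eps_n^2\}}(\mu;A)\le\limsup_n\int_{\overline A}f(x/\eps_n^2,u_n^1)\,\de x\le\Phi_\mu(\overline A)$ by taking $\{u_n^1\}$ as a competitor. For the reverse inequality, suppose by contradiction that $\cF_{\{\eps_n^2\}}(\mu;A)<\Phi_\mu(\overline A)-\delta$ for some $\delta>0$, and choose a test sequence $\{w_k\}$ along a further subsequence $\{\eps_{n_k}^2\}$ realizing this. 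Fix open $B$ with $\overline A\subset B\subset\subset\Omega$ and a smooth cut-off $\phi_j$ with $\phi_j\equiv1$ on $\overline A$, $\phi_j\equiv0$ off $B$, satisfying \eqref{212} for both $\{w_k\}$ and $\{u_{n_k}^1\}$; set $v_k:=\phi_j w_k+(1-\phi_j)u_{n_k}^1$. Then $v_k\wsto\mu$, and, expanding
\[
\cA v_k=\phi_j\cA w_k+(1-\phi_j)\cA u_{n_k}^1+\sum_{i=1}^N(\partial_i\phi_j)A^{(i)}(w_k-u_{n_k}^1),
\]
the commutator vanishes in $W^{-1,q}$ via the compact embedding $\cM\hookrightarrow W^{-1,q}$ of Remark \ref{inclusions} (since $w_k-u_{n_k}^1\wsto 0$ in $\cM$), so $\cA v_k\to 0$. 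Decomposing $\int_\Omega f(x/\eps_{n_k}^2,v_k)\,\de x$ into contributions from $\overline A$ (bounded above by $\Phi_\mu(\overline A)-\delta$), the transition layer (negligible by \eqref{212} and \eqref{200}), and $\Omega\setminus B$ (bounded above by $\Phi_\mu(\Omega\setminus\overline A)$ through weak-$*$ upper semicontinuity on closed sets) produces a competitor with asymptotic energy at most $\Phi_\mu(\overline\Omega)-\delta$. Reinserting $\{v_k\}$ into a full test sequence on $\{\eps_n\}$ (keeping $u_n^1$ at the untouched indices of $\{\eps_n^1\}$ and filling the remaining ones with regularizations $\rho_{\eps_n}\ast\mu$, consistently with the total-variation limit $\Lambda$) then contradicts the minimizing property $\Phi_\mu(\overline\Omega)=\cF_{\{\eps_n\}}(\mu;\Omega)$.

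The hardest step is this last re-insertion: one has to verify that the extended object remains a valid competitor for $\cF_{\{\eps_n\}}(\mu;\Omega)$, in particular that $\cA\to 0$ in $W^{-1,q}$ persists after splicing, that the total variations still admit a single weak-$*$ limit with no mass on $\partial\Omega$, and that the energy on $\Omega$ really lies below $\cF_{\{\eps_n\}}(\mu;\Omega)$ in the limit. The combination of condition \eqref{212}, the compact embedding $\cM\hookrightarrow W^{-1,q}$, and the fact that the cut-off $\phi_j$ is localized strictly inside $\Omega$ (so the boundary behavior of the competing sequences is not disturbed) are the technical ingredients that, following the bookkeeping of Proposition \ref{sub}, are expected to make this work.
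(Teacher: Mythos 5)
Your construction of $\Phi_\mu$ from a (near-)minimizing sequence and the upper bound $\cF_{\{\eps_n^2\}}(\mu;A)\le\Phi_\mu(\overline A)$ via weak-$*$ upper semicontinuity on compact sets both match the paper's argument. The divergence is in the lower bound. The paper handles it with a short inequality chain: it picks an open $B\subset A$ with $\Phi_\mu(\partial B)=0$ and $\Phi_\mu(\overline A\setminus B)<\delta$, writes $\Phi_\mu(\overline A)\le\Phi_\mu(\Omega)-\Phi_\mu(\Omega\setminus\overline B)+\delta$, bounds $\Phi_\mu(\Omega)$ from above by the near-minimizing value and $\Phi_\mu(\Omega\setminus\overline B)$ from below by $\cF_{\{\eps_n^2\}}(\mu;\Omega\setminus\overline B)$ using \eqref{207}, and then closes the chain by invoking the subadditivity of Proposition~\ref{sub} \emph{entirely on the fixed subsequence $\{\eps_n^2\}$}. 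Your plan instead re-derives the pasting inline and then argues by contradiction against $\Phi_\mu(\overline\Omega)=\cF_{\{\eps_n\}}(\mu;\Omega)$, which forces the ``re-insertion'' of the spliced sequence into a competitor over the full $\{\eps_n\}$.

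This re-insertion step is a genuine gap. The spliced sequence $\{\tilde u_n\}$ you describe takes the values $v_k$ on the indices $n_k^2$, $u_n^1$ on the remaining indices of $\{\eps_n^1\}$, and $\rho_{\eps_n}\ast\mu$ elsewhere. The three interleaved subsequences have, in general, three different weak-$*$ limits for their total variations (the limit for $|v_k|$ is determined by the cut-and-paste of $|w_k|$ and $|u_{n_k}^1|$; the limit for $|\rho_{\eps_n}\ast\mu|$ is $|\mu|$; the limit for $|u_n^1|$ is whatever $\Lambda$ the original minimizing sequence produced). Consequently $|\tilde u_n|$ does not admit a single weak-$*$ limit and $\{\tilde u_n\}$ fails the admissibility requirement in \eqref{202}, so no contradiction with the infimum $\cF_{\{\eps_n\}}(\mu;\Omega)$ is produced. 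Invoking ``consistency with the total-variation limit $\Lambda$'' does not resolve this: you have no control forcing the fillers to match $\Lambda$. The cleaner route, which the paper takes, is to never leave the subsequence $\{\eps_n^2\}$: your spliced sequence \emph{is} admissible for $\cF_{\{\eps_n^2\}}(\mu;\cdot)$ (since everything is indexed along $\{\eps_n^2\}$ and there is no patchwork of different subsequences), so the contradiction — or better, the direct inequality as in \eqref{208}--\eqref{209} — should be formulated at the level of $\cF_{\{\eps_n^2\}}$, using $\Phi_\mu(\Omega)\le\cF_{\{\eps_n\}}(\mu)+\delta$ from the construction of the minimizing sequence together with Proposition~\ref{sub} applied to $A\subset\subset B'\subset\subset\Omega$ for suitable $B'$ with $\overline B\subset B'$.
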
 
\begin{proof}
Fix $\mu\in\cM(\Omega;\R{d})$ and $\delta>0$.  Then there exists  a subsequence $\{\eps_n^1\}\subset\{\eps_n\}$ and  a sequence $\{u_n^1\}\subset L^1(\Omega;\R{d})$ such that
\be\label{205}
\cF_{\{\eps_n\}}(\mu) + \delta \geq \lim \int_\Omega f\left(\frac{x}{\eps_n^1},u_n^1(x)\right)\,\de x
\ee
and therefore by \eqref{200} we have that
$$
f\left(\frac{x}{\eps_n^1},u_n^1(x)\right)\,\de x\wsto\Phi_\mu,
$$
for some bounded Radon measure $\Phi_\mu$.
We now claim that we can extract another subsequence $\{\eps_n^2\}\subset\{\eps_n^1\}$ for which \eqref{206} holds.
We start by noting that
\be\label{207}
\cF_{\{\eps_n^2\}}(\mu;A)\leq \liminf_{n\to\infty} \int_A f\left(\frac{x}{\eps_n^2},u_n^2(x)\right)\,\de x\leq \limsup_{n\to\infty} \int_{\cl A} f\left(\frac{x}{\eps_n^1},u_n^1(x)\right)\,\de x\leq \Phi_\mu(\cl A),
\ee
for every subsequence $\{(\eps_n^2,u_n^2)\}\subset\{(\eps_n^1,u_n^1)\}$, where the last inequality follows from known facts about measure theory (see, e.g., \cite{FonLeo}).
To prove the opposite inequality, fix $\delta>0$ and choose an open set $B\subset A$ such that $\Phi_\mu(\partial B)=0$ and $\Phi_\mu(\cl A\setminus B)<\delta$.
Now,
\be\label{208}
\begin{split}
\Phi_\mu(\cl A) & \leq \Phi_\mu(B)+\delta \\
& = \Phi_\mu(\Omega)-\Phi_\mu(\Omega\setminus\cl B) +\delta\\
& \leq \cF_{\{\eps_n^2\}}(\mu) + \delta -\cF_{\{\eps_n^2\}}(\mu;\Omega\setminus\cl B)+\delta,
\end{split}
\ee
where the last line follows by applying \eqref{207} to the closure of $\Omega\setminus\cl B$, and, by \eqref{205} and \eqref{210} the sequence $\{\eps_n^2\}$ is chosen in such a way that
\be\label{209}
\cF_{\{\eps_n^2\}}(\mu)\leq \cF_{\{\eps_n^2\}}(\mu;A)+\cF_{\{\eps_n^2\}}(\mu;\Omega\setminus\cl B).
\ee
By plugging \eqref{209} in \eqref{208}, and letting $\delta\to0$, we conclude the proof of \eqref{206}. 
\end{proof}

\par In the proof of the main result we will use the following properties of $f_{\cA-\hom}$.
Recall its definition in \eqref{103}.

\begin{proposition}\label{Lipschitz}
Let $f:\Omega\times\R{d}\to\R{}$ be Lipschitz continuous in the second variable with Lipschitz constant $L>0$.
Then the function $f_{\cA-\hom}:\R{d}\to\R{}$ defined in \eqref{103} is Lipschitz continuous with the same constant.
\end{proposition}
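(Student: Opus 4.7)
The plan is the direct one: exploit the fact that the admissible set of test functions $w$ in the cell problem defining $f_{\cA-\hom}(b)$ does not depend on $b$, so that a near-optimal $w$ for $b_1$ can be reused as a competitor for $b_2$.

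Fix $b_1, b_2\in\R{d}$ and $\eta>0$. By the definition \eqref{103}, there exist $R\in\N{}$ and $w\in\Lp1{RQ-\per}(\R{N};\R{d})\cap\ker\cA$ with $\ave_{RQ} w=0$ such that
$$
\ave_{RQ} f(x,b_1+w(x))\,\de x \;\le\; f_{\cA-\hom}(b_1)+\eta.
$$
Since the admissibility constraints on $w$ (periodicity, $\cA$-freeness, zero average) are independent of the value $b$ being perturbed, the same $w$ is a competitor in the cell problem for $b_2$, so
$$
f_{\cA-\hom}(b_2)\;\le\;\ave_{RQ} f(x,b_2+w(x))\,\de x.
$$

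Subtracting and using that $f$ is $L$-Lipschitz in its second variable uniformly in $x$, one gets
$$
f_{\cA-\hom}(b_2)-f_{\cA-\hom}(b_1)\;\le\;\ave_{RQ}\bigl[f(x,b_2+w(x))-f(x,b_1+w(x))\bigr]\de x+\eta\;\le\;L|b_2-b_1|+\eta.
$$
Exchanging the roles of $b_1$ and $b_2$ and sending $\eta\to 0$ yields $|f_{\cA-\hom}(b_1)-f_{\cA-\hom}(b_2)|\le L|b_1-b_2|$, which is the claim.

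There is essentially no obstacle here: the only thing to notice is that the infimum defining $f_{\cA-\hom}$ is taken over a set of test fields that is completely decoupled from $b$, so the quasiminimizer for one value of $b$ transfers verbatim to any other value, and the Lipschitz constant passes through the average with no loss. No approximation or regularization is needed.
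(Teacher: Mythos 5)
Your proof is correct and is essentially identical to the paper's own argument: pick a near-optimal competitor $w$ for $b_1$, reuse it as an admissible competitor for $b_2$, subtract, and invoke the Lipschitz continuity of $f$ in the second variable inside the average. No meaningful difference in approach.
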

\begin{proof}
Let $b_1,b_2\in\R{d}$ and let $\eps>0$. 
From the definition of $f_{\cA-\hom}$, there exist $k_1\in\N{}$ and $w_1\in L^1_{kQ-\per}(\R{N};\R{d})\cap\ker\cA$ such that $\ave_{k_1Q} w_1=0$ and 
$$\ave_{k_1Q} f(y,b_1+w_1(y))\,\de y\leq f_{\cA-\hom}(b_1)+\eps.$$
Again by the definition of $f_{\cA-\hom}$ and by the previous inequality we have
\begin{equation*}
\begin{split}
f_{\cA-\hom}(b_2)-f_{\cA-\hom}(b_1) \leq & \ave_{k_1Q} f(y,b_2+w_1(y))\,\de y-\ave_{k_1Q} f(y,b_1+w_1(y))+\eps \\
\leq & \ave_{k_1Q} |f(y,b_2+w_1(y))-f(y,b_1+w_1(y))|\,\de y +\eps \\
\leq & L|b_2-b_1|+\eps.
\end{split}
\end{equation*}
The result follows by exchanging the roles of $b_1$ and $b_2$ and by letting $\eps\to0$.
\end{proof}

\begin{proposition}[Invariance under translations]\label{translations}
Let $b \in \R{d}$ and $\gamma\in Q$. Define
\be\label{222}
f_{\cA-\hom}^\gamma(b):=\inf_{R\in\N{}}\inf\left\{\ave_{RQ} f(y+\gamma,b+w(y))\de y,\;\; w\in L^1_{RQ-\per}(\R{N};\R{d})\cap\ker\cA,\ave_{RQ} w=0\right\}.
\ee
Then we have that $f_{\cA-\hom}(b) = f_{\cA-\hom}^\gamma(b)$.
\end{proposition}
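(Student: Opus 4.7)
The plan is to exploit the fact that translation commutes with the differential operator $\cA$ and preserves $RQ$-periodicity, and to use the $Q$-periodicity of $f$ in the first variable (which implies $RQ$-periodicity for every $R\in\N{}$) to move the shift $\gamma$ out of the integrand via a change of variables. The argument is symmetric in $\gamma$ and $-\gamma$, so it suffices to prove one inequality and obtain the other by the same reasoning.

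First I would prove $f_{\cA-\hom}^\gamma(b)\leq f_{\cA-\hom}(b)$. Given $\eps>0$, pick $R\in\N{}$ and $w\in\Lp1{RQ-\per}(\R{N};\R{d})\cap\ker\cA$ with $\ave_{RQ}w=0$ and $\ave_{RQ} f(y,b+w(y))\,\de y\leq f_{\cA-\hom}(b)+\eps$. Set $\tilde w(y):=w(y+\gamma)$. Since translation commutes with constant-coefficient differential operators, $\tilde w\in\ker\cA$; it is $RQ$-periodic because $w$ is; and $\ave_{RQ}\tilde w=\ave_{RQ+\gamma} w=0$ by $RQ$-periodicity of $w$. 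Hence $\tilde w$ is admissible for $f_{\cA-\hom}^\gamma(b)$.

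Next I would compute, by the change of variables $z=y+\gamma$,
\begin{equation*}
\ave_{RQ} f(y+\gamma, b+\tilde w(y))\,\de y = \ave_{RQ+\gamma} f(z, b+w(z))\,\de z.
\end{equation*}
The map $z\mapsto f(z,b+w(z))$ is $RQ$-periodic, because $f$ is $Q$-periodic (hence $RQ$-periodic) in the first argument and $w$ is $RQ$-periodic; therefore its integral over any translate of $RQ$ equals its integral over $RQ$. Consequently
\begin{equation*}
\ave_{RQ} f(y+\gamma, b+\tilde w(y))\,\de y = \ave_{RQ} f(z, b+w(z))\,\de z \leq f_{\cA-\hom}(b)+\eps,
\end{equation*}
and taking the infimum on the left and sending $\eps\to 0$ yields $f_{\cA-\hom}^\gamma(b)\leq f_{\cA-\hom}(b)$.

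Finally I would obtain the opposite inequality by the same argument, starting from a near-optimal $w$ for $f_{\cA-\hom}^\gamma(b)$ and using $\tilde w(y):=w(y-\gamma)$. I do not expect any serious obstacle: the statement is essentially a change-of-variables identity, and the only things to verify carefully are that $\tilde w$ preserves the admissibility class (closure under translation of $\ker\cA$, of $RQ$-periodicity, and of the zero-mean condition) and that $RQ$-periodicity of the composed integrand makes its integral invariant under shifting the domain by $\gamma$.
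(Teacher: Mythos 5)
Your proof is correct and follows essentially the same route as the paper: translate the competitor $w$, verify that translation preserves $RQ$-periodicity, the zero-mean condition, and membership in $\ker\cA$, then change variables and use the $RQ$-periodicity of $y\mapsto f(y,b+w(y))$ to shift the integration domain back to $RQ$. The paper phrases the domain-shift step as an explicit cancellation of the integrals over $(RQ+\gamma)\setminus RQ$ and $RQ\setminus(RQ+\gamma)$, but that is just a spelled-out version of the periodicity argument you invoke.
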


\begin{proof}
The proof is a simple computation. 
Let $w\in L^1_{RQ-\per}(\R{N};\R{d})\cap\ker\cA$ with $\ave_{RQ} w=0$.
Then,
\be\label{223}
\begin{split}
\ave_{RQ} f(y+\gamma,b+w(y))\,\de y = & \ave_{RQ+\gamma} f(x,b+w(x-\gamma))\de x \\
= & \ave_{RQ} f(x,b+w(x-\gamma))\de x \\
& + \ave_{(RQ+\gamma)\setminus RQ} f(x,b+w(x-\gamma))\de x \\
& -\ave_{RQ\setminus(RQ+\gamma)} f(x,b+w(x-\gamma))\de x \\
=&  \ave_{RQ} f(x,b+w_\gamma(x))\de x \\
\geq & f_{\cA-\hom}(b),
\end{split}
\ee
where we defined $w_\gamma(x):=w(x-\gamma)$.

We notice that $w_\gamma$ is $RQ$-periodic, $\ave_{RQ} w_\gamma=\ave_{RQ} w=0$, and $\cA w_\gamma=0$, which means that $w_\gamma$ is a competitor for \eqref{103}.
The third equality in \eqref{223} holds because the second and third integrals cancel out.
Indeed, by using the $RQ$-periodicity of $w$ and noticing that $f$ is also $RQ$-periodic in the first variable, by means of shifts by $R$ along the coordinate directions one can easily proof that cancellation occurs.

By taking the infimum over all $w\in L^1_{RQ-\per}(\R{N};\R{d})\cap\ker\cA$ with $\ave_{RQ} w=0$, and the infimum over $R\in\N{}$, we obtain
$$
f_{\cA-\hom}^\gamma(b)\geq f_{\cA-\hom}(b).
$$
The reverse inequality is obtained in the same way.
\end{proof}

The following result will be used in Section \ref{LBsp}.
It is an adaptation \cite[Lemma 2.20]{BCMS}.
\begin{lemma}\label{375}
Let $f_n:\Omega\times\R{d}\to\R{}$ be a family of Lipschitz continuous functions in the second variable with the same Lipschitz constant $L$ and let $\nu\in\cS^{N-1}$.
Let $\{u_n\},\{v_n\}\subset L^q(Q_\nu;\R{d})$ be sequences such that $u_n-v_n\wsto0$ in $\cM(Q_\nu;\R{d})$ and $|u_n|+|v_n|\wsto\Lambda$ in $\cM^+(\cl Q_\nu)$, with $\Lambda(\partial Q_\nu)=0$, and $\cA(u_n-v_n)\to0$ in $W^{-1,q}(Q_\nu;\R{M})$ for some $1<q<\frac{N}{N-1}$.
Then there exists a sequence $\{z_n\}\subset L_{Q_\nu-\per}^q(\R{N};\R{d})$ such that $\int_{Q_\nu} z_n=0$, $\cA z_n=0$, $z_n\wsto0$, and
\be\label{376}
\liminf_{n\to\infty} \int_{Q_\nu} f_n(x,u_n(x))\,\de x\geq \liminf_{n\to\infty} \int_{Q_\nu} f_n(x,z_n(x)+v_n(x))\,\de x.
\ee
\end{lemma}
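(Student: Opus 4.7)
The plan is to construct $\{z_n\}$ from $w_n := u_n - v_n$ via a four-step procedure: (i) localize to the interior of $Q_\nu$ with a smooth cut-off, (ii) subtract the mean to obtain a periodic zero-average function, (iii) project onto $\ker\cA$ via the operator of Proposition~\ref{projection}, and (iv) extract a diagonal subsequence. The hypotheses are tailored to each step: $\Lambda(\partial Q_\nu)=0$ controls the cut-off error because no mass concentrates on the boundary; $w_n \wsto 0$ forces the mean to vanish; and $\cA w_n \to 0$ in $W^{-1,q}$ combined with the compact embedding $\cM\hookrightarrow W^{-1,q}$ (Remark~\ref{inclusions}) makes the projection step produce a strong $L^q$-approximation.

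Concretely, pick cut-offs $\phi_j \in C_c^\infty(Q_\nu)$ with $0 \leq \phi_j \leq 1$ and $\phi_j \to 1$ pointwise on $Q_\nu$, and set $\hat w_n^j := \phi_j w_n$, $m_n^j := \ave_{Q_\nu} \hat w_n^j$, and $\bar w_n^j := \hat w_n^j - m_n^j$, extended $Q_\nu$-periodically. Because $\hat w_n^j$ has compact support in $Q_\nu$, its periodic extension lies in $L^q_{Q_\nu-\per}(\R{N};\R{d})$, and $m_n^j \to 0$ as $n \to \infty$ since $\phi_j \in C_c(Q_\nu)$ tests against $w_n \wsto 0$. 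Writing
\[
\cA \bar w_n^j = \phi_j\, \cA w_n + \sum_{i=1}^N A^{(i)} (\partial_i \phi_j)\, w_n,
\]
the first summand tends to $0$ in $W^{-1,q}$ because multiplication by smooth compactly supported functions is continuous on $W^{-1,q}$, and the second tends to $0$ because $w_n \to 0$ strongly in $W^{-1,q}$ thanks to the compact embedding from Remark~\ref{inclusions}. Applying the projection $\cP$ of Proposition~\ref{projection} (in its $Q_\nu$-periodic version), set $z_n^j := \cP \bar w_n^j$; this is $Q_\nu$-periodic, $\cA$-free, of zero mean, and satisfies $\|\bar w_n^j - z_n^j\|_{L^q(Q_\nu)} \leq C\|\cA \bar w_n^j\|_{W^{-1,q}} \to 0$ as $n \to \infty$, which also gives $z_n^j \wsto 0$.

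To close the loop, I use the common Lipschitz constant $L$ to estimate
\[
\left| \int_{Q_\nu} \!\bigl(f_n(x,u_n) - f_n(x,v_n + z_n^j)\bigr)\,\de x \right| \leq L \!\int_{Q_\nu} (1-\phi_j)|w_n|\,\de x + L|m_n^j|\,|Q_\nu| + L\|\bar w_n^j - z_n^j\|_{L^1(Q_\nu)}.
\]
The last two summands vanish as $n \to \infty$ for fixed $j$, while the first satisfies $\limsup_n \int_{Q_\nu}(1-\phi_j)|w_n|\,\de x \leq \int_{\cl Q_\nu}(1-\phi_j)\,\de\Lambda$ by weak-$*$ convergence of $|u_n|+|v_n|$ to $\Lambda$, and this quantity tends to $\Lambda(\partial Q_\nu) = 0$ as $j \to \infty$ by dominated convergence. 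A standard diagonal extraction $z_n := z_n^{j(n)}$ with $j(n) \to \infty$ sufficiently slowly then delivers a sequence satisfying all four listed properties together with \eqref{376}. I expect the main obstacle to be \textbf{Step (iii)}: forcing $\cA \bar w_n^j$ to vanish in $W^{-1,q}$ despite the zero-order defect $(\partial_i\phi_j)w_n$ produced by cutting off. This is precisely where the range $q < N/(N-1)$ is essential, via the compact embedding of $\cM$ into $W^{-1,q}$; without it the projection estimate of Proposition~\ref{projection} could not be used to close the argument.
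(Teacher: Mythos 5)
Your reconstruction is correct and follows the same blueprint as the proof of Lemma~2.20 in \cite{BCMS} to which the paper defers: set $w_n=u_n-v_n$, cut off near $\partial Q_\nu$, subtract the mean, project onto $\ker\cA$ with Proposition~\ref{projection}, estimate the discrepancy with the common Lipschitz constant $L$ and the conditions $\Lambda(\partial Q_\nu)=0$, $w_n\wsto0$, $\cA w_n\to0$ in $W^{-1,q}$, and finally diagonalize. The one point you gloss over is precisely the step the paper singles out (``proving \eqref{376} for the unit cube $Q$ and then for the rotated cube $Q_\nu$''): Proposition~\ref{projection} as stated produces a $Q$-periodic projection, and invoking it ``in its $Q_\nu$-periodic version'' requires a short change-of-variables argument that rotates $Q_\nu$ onto $Q$, transforms $\cA$ into a new constant-rank operator with coefficients $\tilde A^{(i)}=\sum_j A^{(j)}R_{ji}$, applies $\cP$ there, and rotates back; since constant rank and linear growth are rotation-invariant this is routine, but it is the reason the paper (and the original \cite{BCMS} lemma) phrases the argument in two stages. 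Everything else — the identity $\cA\bar w_n^j=\phi_j\cA w_n+\sum_i A^{(i)}(\partial_i\phi_j)w_n$, the use of the compact embedding $\cM\hookrightarrow\hookrightarrow W^{-1,q}$ for $1<q<N/(N-1)$ to kill the commutator term, the estimate $\limsup_n\int(1-\phi_j)|w_n|\le\int(1-\phi_j)\,\de\Lambda\to\Lambda(\partial Q_\nu)=0$, and the diagonal extraction — matches the intended argument.
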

\begin{proof}
The proof is the same as the proof of \cite[Lemma 2.20]{BCMS}.
It is achieved by proving \eqref{376} for the unit cube $Q$ and then for the rotated cube $Q_\nu$.
The proof relies on some Lipschitz estimates which involve only the second variable of  $f$.
\end{proof}

\section{Main result}\label{mainproof}

\par In this section we prove Theorem \ref{main}. 
We shall split the proof in several steps.
\subsection{Upper bound}
In this section we prove the upper bound inequality in Theorem \ref{main}.
We start with the estimate for regular measures $\mu = u\cL^N$, i.e., we prove that:
\be\label{214}
\frac{\de\Phi_u}{\de\cL^N}(x_0)\leq f_{\cA-\hom}(u(x_0)),\qquad\text{for $\cL^N$-a.e.\@ $x_0\in\Omega$}.
\ee
Let $x_0$ be a Lebesgue point for the function $u$.
Given a vanishing sequence of radii $\{r_j\}$ such that $\Phi_u(\partial Q(x_0,r_j))=0$ for all $j\in\N{}$,  there exists a sequence $\{\eps_n^j\}$ such that
\be\label{218}
\Phi_u(Q(x_0,r_j))=\cF_{\{\eps_n^j\}}(u;Q(x_0,r_j))\qquad\text{for all $j\in\N{}$.}
\ee
We will contruct an appropriate sequence $\{u_n^j\}\subset L^1(Q(x_0,r_j);\R{d})$ such that 
$$
\left.
\begin{array}{l}
u_n^j\wsto u \\
\cA u_n^j=0
\end{array}
\right\}\qquad \text{as $n\to\infty$, for all $j\in\N{}$.}
$$
To this end, consider $k\in\N{}$ and a function $w\in L^1_{kQ-\per}(\R{N};\R{d})\cap\ker\cA$, such that $\ave_{kQ} w=0$, and let $w_n^j(x):=w(h_nk(x-x_0)/r_j)$, with $h_n\in\N{}$ to be determined.
It is not hard to prove that 
$$
\cA w_n^j=0\;\text{for all $n,j\in\N{}$}\qquad\text{and that}\;\ave_{Q(x_0,r_j)} w_n^j=\ave_{kQ} w=0.
$$
Moreover $u_n^j(x):=u(x)+w_n^j(x)$ satisfies
$$
u_n^j\wsto u,\qquad \cA u_n^j=\cA u=0\; \text{for all $n,j\in\N{}$.}
$$
We then have
\begin{equation*}
\begin{split}
\frac{\de\Phi_u}{\de\cL^N}(x_0) = & \lim_{j\to\infty}\frac{\Phi_u(Q(x_0,r_j))}{r_j^N} \stackrel{\eqref{218}}{=} \lim_{j\to\infty} \frac{\cF_{\{\eps_n^j\}}(u;Q(x_0,r_j))}{r_j^N}\\
\leq & \limsup_{j\to\infty}\liminf_{n\to\infty}\frac1{r_j^N}\int_{Q(x_0,r_j)} f\left(\frac{x}{\eps_n^j},u_n^j(x)\right)\de x \\ 
= & \limsup_{j\to\infty}\liminf_{n\to\infty}\frac1{r_j^N}\int_{Q(x_0,r_j)} f\left(\frac{x}{\eps_n^j},u(x)+w\left(h_nk\frac{x-x_0}{r_j}\right)\right)\de x \\
\leq & \limsup_{j\to\infty}\left[ \limsup_{n\to\infty} \frac1{r_j^N}\int_{Q(x_0,r_j)} f\left(\frac{x}{\eps_n^j},u(x_0)+w\left(h_nk\frac{x-x_0}{r_j}\right)\right)\de x + A_j \right]\\
= & \limsup_{j\to\infty}\limsup_{n\to\infty}\frac1{(h_nk)^N}\int_{h_nkQ} f\left(\frac{r_jy}{h_nk\eps_n^j}+\frac{x_0}{\eps_n^j},u(x_0)+w(y)\right)\de y
\end{split}
\end{equation*}
where in the fourth line we have used the Lipschitz continuity with respect to the second variable and the fact that $u$ has a Lebesgue point at $x_0$ to obtain that $A_j\to0$, and in the fifth line we changed variables $y:=h_nk\frac{x-x_0}{r_j}$.

We now choose $h_n:=\left[\frac{r_j}{k\eps_n^j}\right]$, and write
$$
\frac{r_jy}{h_nk\eps_n^j}=y+\frac1{h_n}\left\langle\frac{r_j}{k\eps_n^j}\right\rangle y;
$$
also note that, since $\left\langle\frac{r_j}{k\eps_n^j}\right\rangle\in Q$, then
\be\label{2201}
\lim_{n\to\infty} \frac1{h_n}\left\langle\frac{r_j}{k\eps_n^j}\right\rangle=0.
\ee
Let $\gamma_n^j:=\left\langle\frac{x_0}{\eps_n^j}\right\rangle$. 
By using the $Q$-periodicity of $f$ in the first variable, the $kQ$-periodicity of $w$, and \eqref{2201}, we then have that
\begin{equation*}
\begin{split}
\frac{\de\Phi_u}{\de\cL^N}(x_0)\leq & 
\limsup_{j\to\infty}\limsup_{n\to\infty}\frac1{(h_nk)^N}\int_{h_nkQ} f\left(y+\frac1{h_n}\left\langle\frac{r_j}{k\eps_n^j}\right\rangle y+\gamma_n^j,u(x_0)+w(y)\right)\de y \\
= & \limsup_{j\to\infty}\limsup_{n\to\infty}\frac1{k^N}\int_{kQ} f\left(y+\frac1{h_n}\left\langle\frac{r_j}{k\eps_n^j}\right\rangle y+\gamma_n^j,u(x_0)+w(y)\right)\de y \\
= & \limsup_{j\to\infty}\frac1{k^N}\int_{kQ} f(y+\gamma_{n_j}^j,u(x_0)+w(y))\de y \\
\leq & \limsup_{j\to\infty}\left(f_{\cA-\hom}^{\gamma_{n_j}^j}(u(x_0))+\frac1j\right)= f_{\cA-\hom}(u(x_0))
\end{split}
\end{equation*}
where in the last line we used the definition of $\inf$ and Proposition \ref{translations}. 
This proves \eqref{214}. 

\par Let now $\mu\in\cM$ be a general measure. 
Then by defining $u_n:=\rho_n*\mu$ we construct a sequence of smooth functions such that
$$
u_n\wsto\mu,\qquad\qquad\bracket{u_n}(\Omega)\to\bracket\mu(\Omega).
$$

By Proposition \ref{usc} we the have that
\begin{equation*}
\begin{split}
\cF_{\{\eps_n\}}(\mu)\leq & \liminf_{n\to+\infty} \cF_{\{\eps_n\}}(u_n)\leq\liminf_{n\to+\infty} \int_\Omega f_{\cA-\hom}(u_n(x))\,\de x \\
\leq & \limsup_{n\to+\infty} \int_\Omega f_{\cA-\hom}(u_n(x))\,\de x\leq \limsup_{n\to\infty} \cF_{\cA-\hom}(u_n) \leq \cF_{\cA-\hom}(\mu).
\end{split}
\end{equation*}
This proves that
$$
\cF(\mu)\leq\cF_{\cA-\hom}(\mu).
$$

\subsection{Lower bound -- Absolutely continuous part}
Let $\mu\in\cM(\Omega;\R{d})$,  let $\{\eps_n\}$ be a vanishing sequence and let $\Phi_\mu$ be the measure provided by the first statement in Proposition \ref{measure}, with the respective
${u_n}\subset L^1(\Omega;\R{d})$ (not relabeled) such that $u_n\wsto\mu$, and $\cA u_n\to0$ in $W^{-1,q}$.

%
%
%
%
%
%
We now prove the inequality
\be\label{311}
\frac{\de\Phi_\mu}{\de\cL^N}(x_0)\geq f_{\cA-\hom}(u(x_0)),
\ee
where we decomposed $\mu=\mu^a+\mu^s=u\cL^N+\mu^s$, and $x_0$ is a Lebesgue point for $\mu$. In particular, $\mu^s(\{x_0\})=0$.
If the Radon-Nikod\'ym derivative in the left-hand side of \eqref{311} is $\infty$ there is nothing to prove, so we shall assume that it is bounded.

By \eqref{2061} and Theorem \ref{general} we have that
$$
\frac{\de\Phi_{\mu}}{\de\cL^N}(x_0)= \lim_{j\to\infty}\lim_{n\to\infty}\frac1{r_j^N}\int_{Q(x_0;r_j)} f\left(\frac{x}{\eps_n},u_{n}(x)\right)dx.
$$
For any $x\in Q$, define $w_{n,j}(x):=u_{n}(r_jx+x_0)-u(x_0)$. 
Then we have $w_{n,j}\wsto0$ since $x_0$ is a Lebesgue point, $w_{n,j}\in L^1$, $\cA w_{n,j}\to0$ in $W^{-1,q}$. 
Notice that it is not restrictive to choose the sequence $\{r_j\}$ in such a way that 
\be\label{321}
|w_{n,j}|\wsto\lambda\qquad\text{and} \qquad \lambda(\partial Q)=0.
\ee
So,
\begin{equation*}
\begin{split}
\frac{\de\Phi_{\mu}}{\de\cL^N}(x_0)= & \lim_{j\to\infty}\lim_{n\to\infty}\frac1{r_j^N}\int_{Q(x_0;r_j)} f\left(\frac{x}{\eps_n},u(x_0)+w_{n,j}\left(\frac{x-x_0}{r_j}\right)\right)\,\de x,\\
=& \lim_{j\to\infty}\lim_{n\to\infty}\int_{Q} f\left(\frac{r_jy+x_0}{\eps_n},u(x_0)+w_{n,j}(y)\right)\,\de y,
\end{split}
\end{equation*}
where we have changed variables $y:=(x-x_0)/r_j$.
Now we can diagonalize to obtain a sequence $\hat w_k\in L^1$, such that $\hat w_k\wsto0$, $\cA\hat w_k\to0$ in $W^{-1,q}$.
So, by invoking the $Q$-periodicity of $f$ in the first variable,
$$
\frac{\de\Phi_{\mu}}{\de\cL^N}(x_0)= \lim_{k\to\infty}\int_{Q} f(s_ky+\gamma_{k},u(x_0)+\hat w_k(y))\,\de y,
$$
with $s_k:=r_{j(k)}/\eps_{n(k)}$ in such a way that $\lim_{k\to\infty} s_k=\infty$, and $\gamma_{k}:=\langle x_0/\eps_{n(k)}\rangle$.
We claim we can find a sequence $w_k$ such that
\be\label{315}
\lim_{k\to\infty} \int_Q f( s_ky + \gamma_{k}, u(x_0) + \hat{w}_k(y)) \, \de y \geq \lim_{k\to\infty} \int_Q f(s_ky + \gamma_{k}, u(x_0) + w_k(y))\, \de y,
\ee
and $ w_k\wsto0, \; \cA w_k = 0, \; \ave_Q w_k = 0, \; w_k \in L^1_{Q-{\per}}(\R{N};\R{d}).$
To this end, consider a cut-off function $\theta_i$ such that $\theta_i\equiv1$ in $Q_i:= (1-1/i)Q$ and $\theta_i\equiv0$ in $\R{N}\setminus\cl Q$. 
Then, by using Lispschitz continuity of $f$ we obtain
\be\label{316}
\begin{split}
\frac{\de\Phi_{\mu}}{\de\cL^N}(x_0)=&\lim_{k\to\infty} \int_Q  f( s_ky + \gamma_{k}, u(x_0) + \hat{w}_k(y)) \,\de y  \\
 =&\lim_{k\to\infty} \int_Q f( s_ky + \gamma_{k}, u(x_0) + \theta_i(y)\hat{w}_k(y)+(1-\theta_i)\hat w_k(y)) \,\de y\\
 \geq &\liminf_{i\to\infty}\liminf_{k\to\infty} \left[ \int_Q f( s_ky + \gamma_{k}, u(x_0) + \theta_i(y)\hat{w}_k(y)) \,\de y - I_{i,k} \right],
\end{split}
\ee
where
$$
I_{i,k}:= L \int_{Q\setminus Q_i} ( 1 - \theta_i)|\hat{w}_k(y)| \,\de y.
$$
Notice now that $\lim_{i\to\infty}\lim_{k\to\infty} I_{i,k}=0$.
Indeed, since $\hat w_k$ comes from the sequence $w_{n,j}$ by \eqref{321} we have that $|\hat w_k|\wsto\lambda$. 
By definition of weak convergence, and again by \eqref{321}, we have
\be\label{318}
\lim_{i\to\infty}\lim_{k\to\infty} I_{i,k}\leq\lim_{i\to\infty} L\lambda\big(\cl{Q\setminus Q_i}\big)=L\lambda(\partial Q)=0.
\ee

Define now $\hat w_{i,k}:=\theta_i\hat w_k$, extended by $0$ on the whole $\R{N}$. 
Notice that
\be\label{319}
\lim_{k\to\infty}\ave_Q \hat w_{i,k}=
\lim_{k\to\infty}\ave_Q \theta_i\hat w_k=0. 
\ee
From \eqref{316} and \eqref{318} 
we obtain
\be\label{320}
\begin{split}
\frac{\de\Phi_{\mu}}{\de\cL^N}(x_0)=&\lim_{k\to\infty} \int_Q   f( s_ky + \gamma_{k}, u(x_0) + \hat{w}_k(y)) \,\de y \\
\geq & \liminf_{i\to\infty}\liminf_{k\to\infty} \int_Q f(s_{k}y+\gamma_{k},u(x_0)+\hat w_{i,k}(y))\,\de y \\
= & \liminf_{i\to\infty}\liminf_{k\to\infty} \frac1{s_{k}^N} \int_{s_{k}Q} f\left(x+\gamma_{k},u(x_0)+\hat w_{i,k}\left(\frac{x}{s_{k}}\right)\right)\de x \\
\geq & \liminf_{i\to\infty}\liminf_{k\to\infty} \frac1{s_{k}^N} \int_{([s_{k}]+1)Q} f\left(x+\gamma_{k},u(x_0)+\hat w_{i,k}\left(\frac{x}{s_{k}}\right)\right)\de x \\
& -\limsup_{k\to\infty}\frac{C_1}{s_{k}^N} |([s_{k}]+1)Q\setminus s_{k}Q| |u(x_0)|, 
\end{split}
\ee
where we have used the growth condition \eqref{200} together with the fact that $\hat w_{i,k}\left(\frac\cdot{s_{k}}\right)=0$ outside $s_{k}Q$.
It is easy to see that the last term in \eqref{320} vanishes.

Define $n_k:=[s_{k}]+1$ 
and notice that we can restrict $\hat w_{i,k}\left(\frac\cdot{s_{k}}\right)$ to $n_kQ$ and extend it by $n_kQ$-periodicity to the whole $\R{N}$. Let $U_{i,k}\left(\frac\cdot{n_k}\right)$ be the extended function. 
Then \eqref{320} becomes
\begin{equation*}
\begin{split}
\frac{\de\Phi_{\mu}}{\de\cL^N}(x_0)\geq & \liminf_{i\to\infty}\liminf_{k\to\infty} \frac1{n_k^N} \int_{n_kQ} f\left(x+\gamma_{k},u(x_0)+U_{i,k}\left(\frac{x}{n_{k}}\right)\right)\de x \\
=& \liminf_{i\to\infty} \liminf_{k\to\infty}\int_Q f(n_{k}y+\gamma_{k},u(x_0)+U_{i,k}(y))\,\de y.
\end{split}
\end{equation*}
Notice that $U_{i,k}$ is a $Q$-periodic function such that $U_{i,k}\wsto0$ as $k\to\infty$, $\lim_{i\to\infty}\lim_{k\to\infty}\cA U_{i,k}=0$ in $W^{-1,q}$, and, by \eqref{319},
\be\label{323a}
\ave_Q U_{i,k}\to0\qquad \text{as $k\to\infty$}.
\ee
%
Therefore, we can apply Proposition \ref{projection} to the function $U_{i,k}-\ave_Q U_{i,k}$, to obtain a new function $V_{i,k}:=\cP(U_{i,k}-\ave_Q U_{i,k})$ such that 
\be\label{324}
\begin{split}
\frac{\de\Phi_{\mu}}{\de\cL^N}(x_0)\geq & \liminf_{i\to\infty}\liminf_{k\to\infty} \int_Q f\left(n_{k}y+\gamma_{k},u(x_0)+V_{i,k}(y)+\ave_Q U_{i,k}\right)\de y \\
= &  \liminf_{i\to\infty} \int_Q f(n_{k_i}y+\gamma_{k_i},u(x_0)+V_{i,k_i}(y))\de y, \\
\end{split}
\ee
where we have used the Lipschitz continuity of $f$ in the second variable, \eqref{323a}, and chosen an appropriate diagonalizing sequence.
By defining $w_i:=V_{i,k_i}$, formula \eqref{315} is proved.
Estimate \eqref{311} now follows upon changing variables and using Proposition \ref{translations}.
Indeed, from \eqref{324}
\begin{equation*}
\begin{split}
\frac{\de\Phi_{\mu}}{\de\cL^N}(x_0)\geq & \liminf_{i\to\infty} \frac1{n_{k_i}^N} \int_{n_{k_i}Q} f\left(x+\gamma_{k_i},u(x_0)+w_i\left(\frac{x}{n_{k_i}}\right)\right) \de x \\
\geq & \liminf_{i\to\infty} f_{\cA-\hom}^{\gamma_{k_i}}(u(x_0))=\liminf_{i\to\infty} f_{\cA-\hom}(u(x_0))=f_{\cA-\hom}(u(x_0)).
\end{split}
\end{equation*}

\begin{remark}[$\cA$-quasiconvexity of $f_{\cA-\hom}$]\label{fAhomAqc}
As a consequence of \eqref{214} and \eqref{311} we obtain that $f_{\cA-\hom}$ is $\cA$-quasiconvex.
This result will be used in Section \ref{LBsp}.
To prove it, let $b\in\R{d}$, let $w\in C^\infty_{Q-\per}(\R{N};\R{d})\cap\ker\cA$, and define $w_n(x):=w(nx)$. 
Then it is easy to see that $\cA w_n=0$, and, by the Riemann-Lebesgue Lemma, $w_n\wsto0$.
Then,
\begin{equation*}
\begin{split}
f_{\cA-\hom}(b)= & \int_Q f_{\cA-\hom}(b)\,\de x = \cF(b;Q) \leq  \liminf_{n\to\infty} \cF(b+w_n;Q) \\
\leq & \liminf_{n\to\infty} \int_Q f_{\cA-\hom}(b+w_n(x))\,\de x= \liminf_{n\to\infty} \frac1{n^N}\int_{nQ} f_{\cA-\hom}(b+w(y))\,\de y \\
= & \int_Q f_{\cA-\hom}(b+w(y))\,\de y,
\end{split}
\end{equation*}
where the second equality follows from \eqref{214} and \eqref{311}, and where we have used the lower semicontinuity of $\cF$ with respect to the weak-* convergence.
This proves that $f_{\cA-\hom}$ is $\cA$-quasiconvex.
\end{remark}

\subsection{Lower bound -- Singular part}\label{LBsp}
We now prove the inequality
\be\label{351}
\frac{\de\Phi_\mu}{\de|\mu^s|}(x_0)\geq f_{\cA-\hom}^\infty\left(\frac{\de\mu^s}{\de|\mu^s|}(x_0)\right)\qquad \text{for $|\mu^s|$-a.e.\@ $x_0\in\Omega$.}
\ee
Let $x_0\in\supp|\mu^s|\cap E$, where $E$ is the set given by Lemma \ref{lemmarin}. 
%
%
%
%
Now call 
$$v_{x_0}:=\frac{\de\mu^s}{\de|\mu^s|}(x_0),$$
that we suppose to be finite together with $\frac {\de \Lambda} {\de |\mu^s|}(x_0)$.
We distinguish two cases, namely $v_{x_0}\in\cC$ and $v_{x_0}\notin\cC$.

\textbf{Case 1: $v_{x_0}\in\cC$.} 
Let $\{\omega_1,...,\omega_k\}$ be an orthonormal basis for ${\cal V}_{x_0}$ and let $\{\omega_{k+1},...,\omega_N\}$ be an orthogonal basis for the orthogonal complement of ${\cal V}_{x_0}$ in $\R{N}$. We denote by $Q_{0}$ the unitary cube with center at $0$ and faces orthogonal to $\omega_1,\ldots,\omega_k,\ldots,\omega_N$, that is
$$Q_{0}:= \left\{ x \in \R{N}: |x\cdot \omega_i| < \frac1 {2},\, i=1,\ldots,N \right\}.$$
Choose a decreasing sequence of positive radii $r_j \to 0$ as $j\to\infty$ such that  $\Lambda(\partial Q_{0} (x_0,r_j))=0$, thus we also have $\Phi_\mu(\partial Q_{0} (x_0,r_j)) =0$. 
We then have 
\begin{equation*}
\frac {\de \Phi_{\mu}} {\de |\mu^s|} (x_0) = \lim_{j \to \infty} \frac {\Phi_{\mu} (Q_{0} (x_0, r_j))}{|\mu^s|(Q_{0} (x_0,r_j))} = \lim_{j \to \infty}  \lim_{n \to \infty} \frac{ \displaystyle\int_{Q_{0} (x_0,r_j)} f\left( \frac {x} {\eps_n}, u_n (x) \right)\de x } {|\mu^s|(Q_{0} (x_0,r_j))}.
\end{equation*}
Define
$$w_{j,n}(y):= \frac {u_n(x_0+r_jy)} {t_j}$$ 
for $y \in Q_{0}$, where
$$t_j := \frac{|\mu^s|(Q_{0} (x_0,r_j))} {r_j^N} .$$ 
By changing variables we obtain
\begin{equation*}
\frac {\de \Phi_{\mu}} {\de |\mu^s|} (x_0) = \lim_{j \to \infty} \lim_{n \to \infty} \frac1 {t_j} \int_{Q_{0}}  f \left( \frac {r_j y} {\eps_n} + \frac {x_0} {\eps_n}, t_j w_{j,n}(y) \right)\,\de y.
\end{equation*}
Note that 
$$\frac {x_0} {\eps_n} = \left[ \frac {x_0} {\eps_n} \right] + \left< \frac {x_0} {\eps_n} \right>$$
and let $\gamma_n:=\langle x_0/\eps_n\rangle$. 
Thus, using the periodicity of $f$ with respect to the first variable, 
we have that 
\begin{equation*}
\frac {\de \Phi_{\mu}} {\de |\mu^s|} (x_0) = \lim_{j \to \infty} \lim_{n \to \infty}\frac1 {t_j} \int_{Q_{0}}  f \left( \frac {r_j y} {\eps_n} + \gamma_n, 
t_j w_{j,n}(y) \right)\,\de y.
\end{equation*}
We also have that
$$\lim_{j \to \infty} \lim_{n\to \infty} \int_{Q_{0}} w_{j,n} (y) \varphi(y) \,\de y = \langle \tau, \varphi \rangle$$
for every $\varphi \in C({\overline Q_{0}})$, where $\tau$ is a tangent measure to $\mu^s$ at $x_0$ (see Definition \ref{tan}).
Notice that we can choose the sequence of $r_j$ in such a way that all the properties above hold  and we also have $|\tau|(\partial Q_{0})=0$ (see Lemma \ref{lemmarin} and Remark \ref{suc-front}).
We also have
$$\lim_{j \to \infty} \lim_{n\to \infty} \int_{Q_{0}} |w_{j,n} (y)| \,\de y =  \frac {\de\Lambda} {\de |\mu^s|} (x_0).$$
Moreover, as ${\cal A}w_{j,n} \to 0$ in $W^{-1,q}$ for some $q \in \left(1, \frac{N}{N-1}\right)$, we can find a sequence ${\hat w}_j= w_{j,n_j}$ such that
$${\hat w}_j \weakst \tau, \qquad {\cal A}{\hat w}_j \to 0,$$
and 
\begin{equation}\label{369}
\frac {\de \Phi_{\mu}} {\de |\mu^s|} (x_0) = \lim_{j \to \infty}  \frac1 {t_j}\int_{Q_{0}}  f \left( \frac {r_j y} {\eps_{n_j}} + \gamma_{n_j}, t_j  {\hat w}_j (y) \right)\,\de y,
\end{equation}
where $\gamma_{n_j}$ is a subsequence of $\gamma_n$.

Note that ${\cal A} \tau=0$ and that we have
\begin{equation}\label{formamedtang}
\tau = \frac {\de \mu^s} {\de |\mu^s|} (x_0) |\tau|(x{\cdot}\omega_1,\ldots,x{\cdot}\omega_k),
\end{equation}
which means that $|\tau|$ is invariant for translations in the directions of the complement orthogonal of ${\cal V}_{x_0}$, that is, in the directions of $\mathrm{span} \{\omega_{k+1},\ldots,\omega_N\}$.
Indeed, we have
$${\cal A} \tau = \left( \sum_i A^{(i)} |\tau|_{x_i} \right) \frac {\de \mu^s} {\de |\mu^s|} (x_0)=0,$$
which implies that the vector $(|\tau|_{x_1},\ldots,|\tau|_{x_N})$ belongs to $\mathrm{span}\{\omega_1,\ldots,\omega_k\}$.

We regularize $\tau$ (see Remark \ref{suc-front}) and construct a sequence
$$v_j = \rho_{\eps_{n_j}} * \tau,$$
defined in $Q_{x_0}$. 
As $v_j$ has the form as in \eqref{formamedtang} we can extend it by $Q_{0}$-periodicity to a function ${\tilde v}_j$ defined on $\R{N}$, and we still have 
$${\cal A} {\tilde v}_j = 0$$
in $\R{N}$, that is, the jumps of $v_j$ are not penalized by ${\cal A}$ (see \cite{BCMS} for the details).

We now apply Lemma \ref{375} to the sequences $\{\hat w_j\}$ and $\{\tilde v_j\}$, with $f_n=f$ for all $n\in\N{}$, to obtain that there exists a sequence $\{z_i\}\subset L^q_{Q_{0}-\per}(\R{N};\R{d})$, $\int_{Q_{0}} z_j=0$, $\cA z_j=0$, $z_j\wsto0$ such that, from \eqref{376}, \eqref{369} becomes
\begin{equation*}
\frac {\de \Phi_{\mu}} {\de |\mu^s|} (x_0) \geq \lim_{j \to \infty} \frac1 {t_j}  \int_{Q_{0}} f \left( \frac {r_j y} {\eps_{n_j}} + \gamma_{n_j}, t_j  \hat u_j (y) \right)\,\de y
\end{equation*}
where
$$\hat u_j (y) = z_j(y) + {\tilde v}_j(y)$$
is $L^q_{Q_{0}-\per} (\R{N}; \R{d})$, ${\cal A}u_j=0$, $u_j \weakst \tau$.
%
%
%
%
%
%

The objective is to modify $\{\tilde u_j\}$ in order to obtain periodicity in the directions of the coordinate axes. Consider cut-off functions $\theta_i \subset C_c^{\infty} (Q_{0}; [0,1])$ such that $\theta_i (x) \equiv 1$ in $\left( 1- \frac1 {i} \right) Q_{0}$. Thus, using the Lipschitz continuity and the condition $\tau (\partial Q_{0})=0$, we have

\begin{equation*}
\frac {d \Phi_\mu} {d |\mu^s|} (x_0) \geq \liminf_{i\to\infty} \liminf_{j\to\infty}  \frac1 {t_j} \int_{Q_{0}} f \left(\frac {r_j y} {\eps_{n_j}} + \gamma_{n_j}, t_j \theta_i (y) \hat u_j(y) \right) \,dy.
\end{equation*}
After appropriate diagonalization we get a sequence $\{u_{j_i}\}$ such that
\begin{equation*}
\frac {d \Phi_\mu} {d |\mu^s|} (x_0) \geq \liminf_{i\to\infty}   \frac1 {t_{j_i}} \int_{Q_{0}} f \left(\frac {r_{j_i} y} {\eps_{n_{j_i}}} + \gamma_{n_{j_i}}, t_{j_i} \hat u_{j_i} (y) \right) \,dy,
\end{equation*}
where $\hat u_{j_i} \wsto \tau$, ${\cal A}\hat u_{j_i} \to 0$ in $W^{-1,q}$. 
By a change of variables we obtain
\begin{equation*}
\frac {d \Phi_\mu} {d |\mu^s|} (x_0) \geq \liminf_{i\to\infty}   \frac1 {t_{j_i}} \ave_{T_{j_i} Q_{0}} f \left(x+\gamma_{n_{j_i}}, t_{j_i} U_i \left( \frac {x} {T_{j_i}} \right) \right) \,dx,
\end{equation*}
where $T_{j_i}:= \frac {r_{j_i}}{\epsilon_{n_{j_i}}} \to \infty$ is a sequence of no necessarily integers and $U_i := \hat u_{j_i}$. 
Now choose a cube $T_iQ$ such that $T_{j_i} Q_0 \subset \subset T_iQ$ and extend $U_ i \left( \frac {x} {T_{j_i}} \right)$ by $T_i Q$-periodicity. We can choose, for instance, 
\be\label{360}
T_i = [C_N T_{j_i} ] +1,
\ee
where $C_N = \sqrt{N}$ is the diagonal of unit cube in $\R{N}$.
We then have
\begin{equation*}
\frac {d \Phi_\mu} {d |\mu^s|} (x_0) \geq \liminf_{i\to\infty}   \frac1 {t_{j_i}}  \frac1 {T_{j_i}^N} \left [\int_{T_i Q} f \left(x+\gamma_{n_{j_i}}, t_{j_i} U_i \left( \frac {x} {T_{j_i}} \right) \right) \,dx - \int_{T_i Q \setminus T_{j_i} Q_0} f(x+\gamma_{n_{j_i}},0) \,dx\right],
\end{equation*}
and, as the second integral vanishes as $i\to\infty$, by using \eqref{360} we obtain
\begin{equation*}
\frac {d \Phi_\mu} {d |\mu^s|} (x_0) \geq \liminf_{i\to\infty}   \frac{C_N^N}{t_{j_i}} \ave_{T_i Q} f \left(x+\gamma_{n_{j_i}}, t_{j_i} U_i \left( \frac {x} {T_{j_i}} \right) \right) \de x,
\end{equation*}         
and by changing variables again we get
\begin{equation}\label{361}
\frac {d \Phi_\mu} {d |\mu^s|} (x_0) \geq \liminf_{i\to\infty} \frac{C_N^N}{t_{j_i}} \int_{Q} f \left(T_i y+\gamma_{n_{j_i}}, t_{j_i} U_i \left( \frac {T_i y} {T_{j_i}} \right) \right) \de y.
\end{equation}   
As ${\cal A} U_i \left( \frac {T_i} {T_{j_i}}\cdot \right) \to 0$ and $U_i \left( \frac { T_i} {T_{j_i}} \cdot\right)$ is $Q$-periodic, $U_i \left( \frac {T_i} {T_{j_i}}\cdot \right)$ fulfills the hypotheses of Proposition \ref{projection}, and therefore \eqref{361} becomes
\begin{equation*}
\frac {d \Phi_\mu} {d |\mu^s|} (x_0) \geq \liminf_{i\to\infty} \frac{C_N^N}{t_{j_i}} \int_{Q} f \left(T_i y+\gamma_{n_{j_i}}, t_{j_i}  \left( \int_Q U_i \left( \frac {T_i x} {T_{j_i}} \right) \de x + V_i(y) \right) \right) \de y,
\end{equation*}  
where 
$$V_i(y) := \cP \left(U_i \left( \frac {T_i y} {T_{j_i}} \right) - \int_Q U_i \left( \frac {T_i x} {T_{j_i}} \right) \de x \right).$$
It is trivial to verify that $\int_Q V_i(y)\de y=0$. 
By changing variables back, we get
\begin{equation*}
\frac {d \Phi_\mu} {d |\mu^s|} (x_0) \geq \liminf_{i\to\infty}    \frac{C_N^N}{t_{j_i}} \ave_{T_i Q} f \left(x+\gamma_{n_{j_i}}, t_{j_i}  \left( \int_Q U_i \left( \frac {T_i x} {T_{j_i}} \right) \de x + V_i\left( \frac {x} {T_i} \right) \right) \right) \de x.
\end{equation*} 
Now, since $\int_Q U_i \left( \frac {T_i x} {T_{j_i}} \right) \de x \to \frac1{C_N^N} \frac {\de \mu^s} {\de |\mu^s|} (x_0)$, by using the Lipschitz condition we get
\begin{equation*}
\begin{split}
\frac {d \Phi_\mu} {d |\mu^s|} (x_0) \geq & \liminf_{i\to\infty}    \frac{C_N^N}{t_{j_i}}\ave_{T_i Q} f \left(x+\gamma_{n_{j_i}}, t_{j_i}  \left( \frac1{C_N^N} \frac {\de \mu^s} {\de |\mu^s|} (x_0) + V_i\left( \frac {x} {T_i} \right) \right) \right) \de x \\
& -\limsup_{i\to\infty} \frac{LC_N^N}{t_{j_i}}\ave_{T_i Q} \left|\int_Q U_i \left( \frac {T_i x} {T_{j_i}} \right) \de x - \frac1{C_N^N} \frac {\de \mu^s} {\de |\mu^s|} (x_0) \right|\de x,
\end{split}
\end{equation*}
and the $\limsup$ vanishes.
Therefore, by \eqref{222} 
we obtain
\be\label{365}
\begin{split}
\frac {\de \Phi_\mu} {\de |\mu^s|} (x_0) \geq & \liminf_{i\to\infty} \frac  {f_{{\cal A}-\hom}^{\gamma_{n_{j_i}}} \left(\frac{t_{j_i}}{C_N^N}\frac {\de \mu^s} {\de |\mu^s|} (x_0)\right)} {t_{j_i}/ C_N^N} \\
= & \liminf_{i\to\infty} \frac  {f_{{\cal A}-\hom} \left(\frac{t_{j_i}}{C_N^N}\frac {\de \mu^s} {\de |\mu^s|} (x_0)\right)} {t_{j_i}/ C_N^N} = {f^{\infty}_{{\cal A}-\hom}} \left(  \frac {\de \mu^s} {\de |\mu^s|} (x_0)\right).
\end{split}
\ee
Here we have used Proposition \ref{translations} and, in the last equality, the fact that $\cA$-quasiconvex functions are convex in the directions of the characteristic cone $\cC$ (see \cite[Proposition 3.4]{FM}). $\cA$-quasiconvexity for $f_{\cA-\hom}$ was proved in Remark \ref{fAhomAqc}, and this implies that the $\limsup$ in \eqref{104} is actually a limit.
This concludes the proof of \eqref{351} in the case $v_{x_0}\in\cC$.

\textbf{Case 2: $v_{x_0}\notin\cC$.}
This case is analogous to Case 1, but simpler, since the condition $v_{x_0}\notin\cC$ implies that $\cV_{v_{x_0}}=\{0\}$, and in turn that the tangent measure $\tau$ is given by 
\be\label{363}
\tau=v_{x_0}\cL^N.
\ee
In particular, we do not need to find a suitable rotated cube $Q_{0}$ to perform the homogenization and we do not need to regularize the tangent measure.

Since from Proposition \ref{Lipschitz} $f_{\cA-\hom}$ is Lipschitz continuous, by Lemma 4.2 in \cite{BCMS} there exists a sequence $\{t_j\}$ such that $t_j\to\infty$ as $j\to\infty$ and
\be\label{lim}
f_{\cA-\hom}^\infty(v_{x_0})=\lim_{j\to\infty}\frac{f_{\cA-\hom}(t_j v_{x_0})}{t_j}.
\ee
It is then possible to choose a sequence $\{\delta_j\}$ such that $\delta_j\to0$ as $j\to\infty$, $\Lambda(\partial Q(x_0,\delta_j)=0$, and
$$
t_j=\frac{|\mu^s|(Q(x_0,\delta_j))}{\delta_j^N}.
$$
With these sequences $\{t_j\}$ and $\{\delta_j\}$ we get the equivalent of equation \eqref{369} where the rotated cube $Q_{0}$ is replaced by the unit cube $Q$ and the sequence $\hat w_j$ converges weakly-* to the tangent measure $\tau$ in \eqref{363}. 
The proof proceeds now as in Case 1, with the integers $T_i$ in \eqref{360} now replaced by
$$
T_i=[T_{j_i}]+1
$$
(in particular, there is no need to introduce the constant $C_N$).
To conclude, we observe that the last equality in \eqref{365} now follows from \eqref{lim}.

\begin{remark} 
It easy to prove that under coercivity conditions any sequence of minimizers (or approximate minimizers in case the infimum is not attained) of the functional
$$I(u)= \int_{\Omega} f \left( \frac {x} {\epsilon_n}, u \right)\,dx$$
will converge (up to a subsequence) to the minimum points of the limit functional
$$\cF_{\cA-\hom}(\mu):=\int_\Omega f_{\cA-\hom}(u^a)\,\de x+\int_\Omega f_{\cA-\hom}^\infty\left(\frac{\de\mu^s}{\de|\mu^s|}\right)\,\de|\mu^s|.$$

In fact, in the particular case where the operator $\cal A$ admits an extension property, the condition ${\cal A} u_n \to 0$ in \eqref{202} can be replaced by ${\cal A} u_n = 0$, and the property above comes directly from known results in $\Gamma$-convergence on metric spaces.
\end{remark}

\noindent {\bf Acknowledgments.} 
Partial support for this research was provided by the Funda\c{c}\~ao para a Ci\^encia e a Tecnologia (Portuguese Foundation for Science and Technology) through the Carnegie Mellon Portugal Program under Grant FCT-UTA\_CMU/MAT/0005/2009 ``Thin Structures, Homogenization, and Multiphase Problems''. The authors warmly thank the Centro de An\'alise Matem\'atica, Geometria e Sistemas Din\^amicos (CAMGSD) at the Departamento de Matem\'atica of the Instituto Superior T\'ecnico, Universidade de Lisboa, where the research was carried out.

\end{document}